\tikzstyle{observation}=[text=blue]
\tikzstyle{morphism}=[fill=white, draw=black, shape=rectangle]
\tikzstyle{medium box}=[fill=white, draw=black, shape=rectangle, minimum width=0.8cm, minimum height=0.9cm]
\tikzstyle{large morphism}=[fill=white, draw=black, shape=rectangle, minimum width=1cm, minimum height=2cm]
\tikzstyle{bn}=[fill=black, draw=black, shape=circle, inner sep=1.5pt]
\tikzstyle{wn}=[fill=white, draw=black, shape=circle, inner sep=1.5pt]
\tikzstyle{nn}=[fill=gray, draw=gray, regular polygon, regular polygon sides=3, minimum width=0.3cm, shape border rotate=90, inner sep=0pt]
\tikzstyle{state}=[fill=white, draw=black, regular polygon, regular polygon sides=3, minimum width=0.8cm, shape border rotate=90, inner sep=0pt]
\tikzstyle{effect}=[fill=white, draw=black, regular polygon, regular polygon sides=3, minimum width=0.8cm, shape border rotate=0, inner sep=0pt]
\tikzstyle{medium state}=[fill=white, draw=black, regular polygon, regular polygon sides=3, minimum width=1.3cm, inner sep=0pt, shape border rotate=180]
\tikzstyle{medium effect}=[fill=white, draw=black, regular polygon, regular polygon sides=3, minimum width=1.3cm, inner sep=0pt, shape border rotate=0]
\tikzstyle{large state}=[fill=white, draw=black, regular polygon, regular polygon sides=3, minimum width=2.2cm, shape border rotate=180, inner sep=0pt]
\tikzstyle{treenode}=[fill=white, draw=none, shape=circle]
\tikzstyle{arrow}=[->]
\tikzstyle{dashed box}=[-, dashed]
\tikzstyle{condition}=[draw=blue, dashed]
\tikzstyle{multiple}=[-, line width=0.5mm]
\tikzstyle{none}=[]
\tikzset{baseline=(current  bounding  box.center)}
\newcommand{\Sec}{\S}
\newcommand{\R}{\mathbb R}
\newcommand{\keyword}[1]{\mathrm{#1}}
\newcommand{\catname}[1]{\mathtt{{#1}}}
\newcommand{\iv}[1]{\lbrace\!|{#1}|\!\rbrace}
\newcommand{\epi}{\keyword{epi}}
\newcommand{\cl}{\keyword{cl}}
\newcommand{\id}{\keyword{id}}
\newcommand{\op}{\keyword{op}}
\newcommand{\conv}{\,\operatorname{\Box}\,}
\newcommand{\z}{\mathbf 0}
\newcommand{\N}{\mathcal N}
\newcommand{\im}{\keyword{im}}
\newcommand{\logpdf}{\keyword{logpdf}}
\newcommand{\cgf}{\keyword{cgf}}
\newcommand{\g}{-}
\newcommand{\cxto}{\rightharpoonup}
\newcommand{\cvto}{\rightharpoondown}
\newcommand{\exR}{\overline{\mathbb R}}
\newcommand{\white}{\circ}
\newcommand{\black}{\bullet}
\newcommand{\cxbifn}{\catname{CxBiFn}}
\newcommand{\cvbifn}{\catname{CvBiFn}}
\newcommand{\gauss}{\mathsf{Gauss}}
\newcommand{\cxcirc}{\stackrel{\cxto}{\circ}}
\newcommand{\cvcirc}{\stackrel{\cvto}{\circ}}
\newcommand{\graph}[1]{\underline{{#1}}}
\newcommand{\inff}[2]{\inf_{{#1}}\,\left\lbrace{#2}\right\rbrace}
\newcommand{\supp}[2]{\sup_{{#1}}\,\left\lbrace{#2}\right\rbrace}
\begin{document}

\title{Towards a Compositional Framework for Convex Analysis (with Applications to Probability Theory)}

\titlerunning{A Compositional Framework for Convex Analysis}

\author{Dario Stein\inst{1} \and Richard Samuelson\inst{2}}
\authorrunning{Stein and Samuelson}
%
\institute{Radboud University Nijmegen, The Netherlands, \email{dario.stein@ru.nl} \and
University of Florida, Gainesville, USA, \email{rsamuelson@ufl.edu}}

\maketitle

\begin{abstract}
We introduce a compositional framework for convex analysis based on the notion of \emph{convex bifunction} of Rockafellar. This framework is well-suited to graphical reasoning, and exhibits rich dualities such as the Legendre-Fenchel transform, while generalizing formalisms like graphical linear algebra, convex relations and convex programming. We connect our framework to probability theory by interpreting the Laplace approximation in its context: The exactness of this approximation on normal distributions means that logdensity is a functor from Gaussian probability (densities and integration) to concave bifunctions and maximization.
\keywords{convex analysis  \and category theory \and categorical probability}
\end{abstract}

\section{Introduction}

Convex analysis is a classical area of mathematics with innumerous applications in engineering, economics, physics, statistics and information theory. The central notion is that of a convex function $f : \R^n \to \R$, satisfying the inequality $f(tx + (1-t)y) \leq tf(x) + (1-t)f(y)$ for all $t \in [0,1]$. Convexity is a useful property for optimization problems: Every local minimum of $f$ is automatically a global minimum. Convex functions furthermore admit a beautiful duality theory; the ubiquitous Legendre-Fenchel transform (or convex conjugation) defined as
\[ f^*(x^*) = \supp x { \langle x^*,x\rangle - f(x) } \]
encodes $f$ in terms of all affine functions $\langle x^*,x\rangle - c$ majorized by $f$ (here $\langle -,-\rangle$ denotes the standard inner product on $\R^n$). The function $f^*$ is convex regardless of $f$, and under a closedness assumption we recover $f^{**}=f$.

While convex analysis is a rich field, its compositional structure is not readily apparent; the central notion, convex functions, is not closed under composition. The notion which \emph{does} compose is less well known: a \emph{convex bifunction}, due to \cite{rockafellar}, is a jointly convex function $F : \R^{m}\times\R^n \to \R$ of two variables. Such bifunctions compose via infimization
\[ (F \circ G)(x,z) = \inff y {F(y,z) + G(x,y)} \]

\paragraph{Categorical Methods} In this work, we will study bifunctions and their associated dualities in the framework of category theory. Graphical methods are ubiquitous in engineering and statistics, and can used to derive efficient algorithms by making use of the factorized structure of a problem. The language of props and string diagrams unifies these methods, as a large body of work on graphical linear algebra and applied category theory shows \cite{baez:control,baez:props,hanks2023compositional,bonchi:cat_signal_flow}. We extend these methods to problems of convex analysis and optimization. Our category of bifunctions subsumes an array of mathematical structures, such as linear maps and relations, convex relations, and (surprisingly) multivariate Gaussian probability. \vspace{-0.4cm}

\begin{figure}
    \centering
    \includegraphics[scale=0.38]{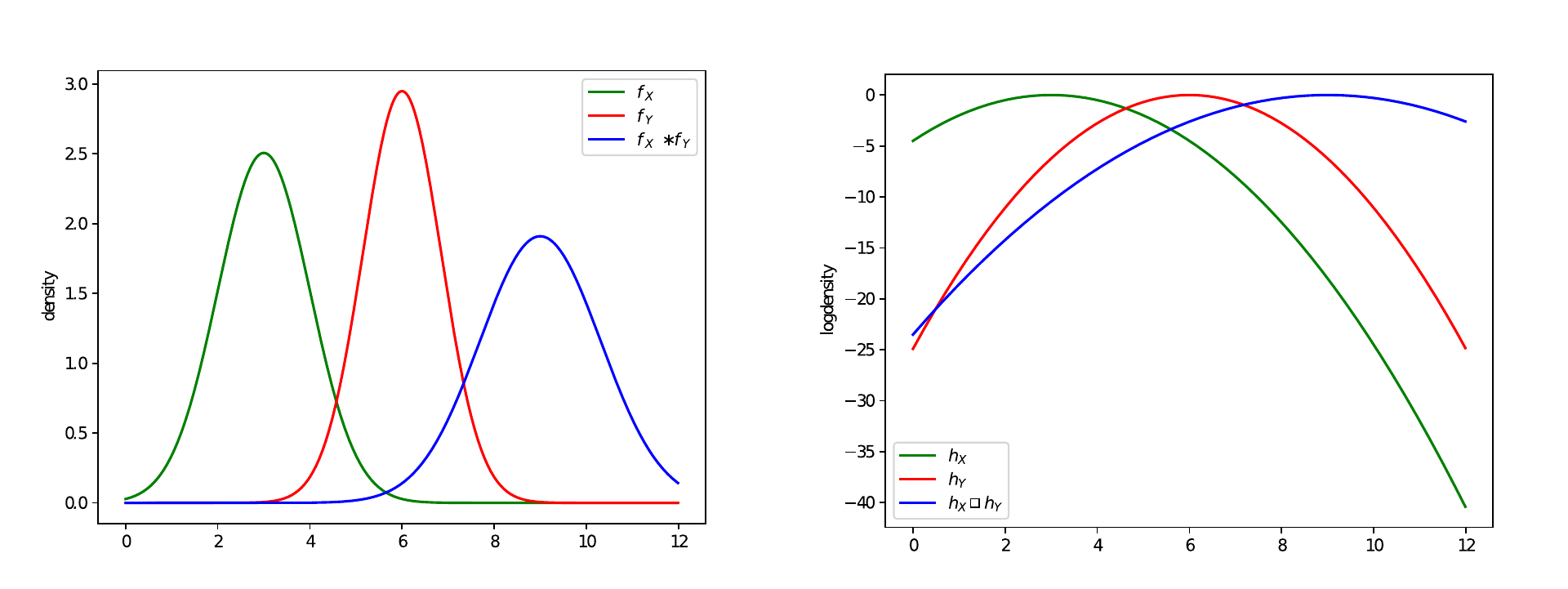}
    \caption{Addition of independent normal variables $X,Y$. Left: pdf and convolution, right: logpdf and sup-convolution}
    \label{fig:enter-label}
\end{figure}

\vspace{-0.5cm}
\paragraph{Applications to Probability Theory}

Convex analysis offers a rich perspective on Gaussian (multivariate normal) probability distributions: The log-density $h(x) = \log f(x)$ of a Gaussian random variable is a concave function of the form\footnote{we intentionally disregard a scalar $+C$} 
\[ h(x) = -\frac{(x-\mu)^2}{2\sigma^2} \]
It turns out that anything we can do with Gaussian densities and integration can instead be done with logdensities and maximization. For example, to compute the density of a sum of independent variables, we may take a convolution of densities, or instead compute a sup-convolution of logdensities (see Fig.~\ref{fig:enter-label}), as 
\[ \log \int f_X(x)f_Y(z-x) \mathrm d x = \supp x {h_X(x) + h_Y(z-x)} \]

This is highly particular to Gaussians. We can elegantly formalize this statement in categorical terms, as our main theorem states: Logprobability defines a functor from Gaussian probability to concave bifunctions (Theorem~\ref{thm:gauss_functor})

In this sense, the essence of Gaussians is captured by concave quadratic functions. By extending our viewpoint to partial concave quadratic functions, we obtain a generalized notion of \emph{Gaussian relation} which includes improper priors. Such entities are subtle to describe measure-theoretically, but straightforward in the convex analytic view. The duality theory of bifunctions generalizes the duality of precision and covariance, and more generally connects to the notion of cumulant-generating function in probability theory. 

We elegantly formalize the connections between convex analysis and probability theory using the language of Markov categories \cite{fritz2020synthetic}, which are a categorical formalism for probability theory, and have close connections to the semantics of probabilistic programs \cite{stein2021structural}.

\paragraph{Contribution and Outline}

This paper is intended to serve as a high-level roadmap to a categorical treatment of convex analysis. Our aim is to spell out the underlying structures, and present a diverse range of connections, especially with diagrammatic methods and categorical probability. For the sake of presentation, we choose to stick to general statements and keep some technical notions (such as regularity conditions) informal. Spelling out the details in a concrete setting is a starting point for future developments. We elaborate one such particular setting in detail, namely Gaussian probability. \\

We begin \Sec\ref{sec:intro_cx} by recalling the relevant notions of convex analysis, and proceed to define and study the categorical structure of bifunctions in \Sec\ref{sec:bifn}. This includes two structures as a hypergraph category and the duality theory of \Sec\ref{sec:bifn_duality}.

In \Sec\ref{sec:examples}, we elaborate different examples of categories which embed in bifunctions, such as linear and affine algebra, linear algebra, convex relations and convex optimization problems. In each case, the embedding preserves the relevant categorical structures and dualities. In particular, we show that the theory of bifunctions is a conservative extension of graphical linear algebra \cite{graphical_la}.

In \Sec\ref{sec:gaussians} we begin making connections to probability theory. We recall Gaussian probability from a categorical point of view, and construct the embedding functor to bifunctions. We discuss how partial quadratic functions can be seen as an extension of Gaussian probability beyond measure theory.

We conclude with \Sec\ref{sec:perspective}-\ref{sec:related} discussing the wider context of this work, elaborating connections of probability and convex analysis such as the Laplace approximation and cumulant generating functions, and the idea of idempotent analysis as a `tropical limit' of ordinary analysis.

\section{Overview of Convex Analysis}\label{sec:intro_cx}

The following section is a brief overview of standard material in convex analysis; all propositions and conventions are taken from \cite{rockafellar}.

\emph{Caveat:} An important feature of convex analysis is that it deals with formal infinities $+\infty, -\infty$ in a consistent fashion. This is crucial because optimization problems may be unbounded. Traditionally, one considers the extended real numbers $\exR = [-\infty, +\infty]$ and extends the usual laws of arithmetic to them. The case $(+\infty) + (-\infty)$ is left undefined and carefully avoided like $0/0$ in real analysis. A more systematic approach \cite{willerton,fujii2019} is based on enriched category theory, and endows $\exR$ with the structure of a commutative quantale, which gives it totally defined operations with a particular arithmetic.

\emph{A more serious caveat} is that many results in convex analysis require specific regularity assumptions to hold. As these assumptions are not the focus of the present paper, so we will state some big picture theorems in \Sec\ref{sec:bifn} under reservation of these conditions. We then elaborate an array of concrete examples \Sec\ref{sec:examples}-\ref{sec:gaussians} where we make sure that all regularity conditions are indeed satisfied. We discuss this drawback in \Sec\ref{sec:related}. \\

A subset $A \subseteq \R^n$ is \emph{convex} if for all $x,y \in A$ and $t \in [0,1]$, we have $tx + (1-t)y \in A$. The \emph{epigraph} of a function $f : \R^n \to \exR$ is the set $\epi(f) = \{ (x,y) \in \R^{n+1} : y \geq f(x) \}$. We say that $f$ is convex if $\epi(f)$ is a convex subset of $\R^{n+1}$. This is equivalent to the well-known definition from the introduction, while accounting for infinities. We say that $f$ is \emph{concave} if $(-f)$ is convex. 

\begin{example}
The following functions are convex: linear functions, $|x|$, $x^2$, $\exp(x)$, $-\ln(x)$. 
For a convex subset $A \subseteq \R^n$, the \emph{convex indicator function} $\delta_A : \R^n \to \exR$ is defined by 
\[ \delta_A(x) = \begin{cases}
0, &x \in A \\
+\infty, &x \notin A
\end{cases}
\]
We also write indicator functions using modified Iverson brackets as $\iv{x \in A} = \delta_A(x)$. The \emph{concave indicator function} of $A$ is $-\delta_A(x)$. 
\end{example} \vspace{-0.3cm}
The \emph{infimal convolution} of convex functions $f,g : \R^n \to \exR$ is defined by $(f \conv g)(x) = \inff y {f(x-y) + g(y)}$. The convex function $f$ is called \emph{closed} if $\epi(f)$ is a closed subset of $\R^{n+1}$; this is equivalent to $f$ being lower semicontinuous.

\subsection{Conjugacy -- the Legendre-Fenchel transform}

\begin{definition}
For a convex function $f : \R^n \to \exR$, its convex conjugate $f^* : \R^n \to \exR$ is the convex function  
\[ f^*(x^*) = \supp x {\langle x^*,x\rangle - f(x)} \]
For a concave function $g : \R^n \to \exR$, its concave conjugate $g^* : \R^n \to \exR$ is the concave function 
\[ g^*(x^*) = \inff x {\langle x^*,x \rangle - g(x) } \]
Note that if $g=-f$ then $g^*(x^*)=-f^*(-x^*)$
\end{definition}
Geometrically, $f^*$ encodes information about which affine functions $\langle x^*,-\rangle - c$ are majorized by $f$. It is thus natural to view $f^*$ as a function on covectors $x^* \in (\R^n)^*$. This is for example done in \cite{willerton}, but in order to keep notation consistent with \cite{rockafellar}, we make the traditional identification $(\R^n)^* \cong \R^n$ via the inner product, and the notation $x^*$ is purely decoration. The Legendre-Fenchel transform has applications in many areas of mathematics and physics \cite{touchette2005legendre}, such as the Hamiltonian formalism in mechanics, statistical mechanics or large deviation theory (e.g. \Sec\ref{sec:cgf}). \\

A closed convex function $f$ is the pointwise supremum of all affine functions $h \leq f$ \cite[12.1]{rockafellar}. This allows them to be recovered by their Legendre transform

\begin{proposition}[{\cite[Theorem~12.2]{rockafellar}}]\label{prop:double_conjugate}
For any convex function $f : \R^n \to \exR$, $f^*$ is a closed convex function. We have $f^{**} = f$ if and only if $f$ is closed. 
\end{proposition}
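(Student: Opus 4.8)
The plan is to establish the two assertions in turn, taking as given the quoted fact \cite[12.1]{rockafellar} that a closed convex function is the pointwise supremum of the affine functions it majorizes; that separation-theorem-based statement is the only genuinely nontrivial input, and everything else is rearrangement and bookkeeping with $\pm\infty$.

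For the first assertion, observe that for each fixed $x \in \R^n$ the map $x^* \mapsto \langle x^*, x\rangle - f(x)$ is affine in $x^*$ (read as the constant $-\infty$ when $f(x) = +\infty$), and $f^*$ is by definition the pointwise supremum over $x$ of this family. A pointwise supremum of affine functions is convex and lower semicontinuous, hence closed, so $f^*$ is closed convex with no hypothesis on $f$. (When $\epi(f) = \emptyset$ the family degenerates to the single constant $-\infty$ and $f^* \equiv -\infty$, which we count as improperly closed convex, consistent with the conventions.) Applying this with $f^*$ in the role of $f$ shows $f^{**}$ is always closed convex; in particular $f^{**} = f$ forces $f$ to be closed, giving one direction of the biconditional for free.

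For the converse, suppose $f$ is closed convex. One inequality is unconditional: from $f^*(x^*) \ge \langle x^*, x\rangle - f(x)$ we get $f(x) \ge \langle x^*, x\rangle - f^*(x^*)$ for all $x, x^*$, and taking the supremum over $x^*$ yields $f \ge f^{**}$. For the reverse inequality, note that an affine function $x \mapsto \langle a, x\rangle - c$ lies below $f$ precisely when $c \ge \langle a, x\rangle - f(x)$ for every $x$, i.e.\ precisely when $c \ge f^*(a)$; hence the affine minorants of $f$ are exactly the maps $x \mapsto \langle a, x\rangle - c$ with $c \ge f^*(a)$, the tightest one of slope $a$ being $x \mapsto \langle a, x\rangle - f^*(a)$ (for $a$ with $f^*(a) < +\infty$). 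By \cite[12.1]{rockafellar}, $f$ is the supremum of all its affine minorants, so $f(x) = \supp a {\langle a, x\rangle - f^*(a)} = f^{**}(x)$, and combined with $f \ge f^{**}$ this gives $f^{**} = f$.

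The part I expect to need the most care is not any of the above but the handling of improper convex functions and the precise meaning of ``closed'' for them: if $f$ takes the value $-\infty$ anywhere then $f^* \equiv +\infty$ and so $f^{**} \equiv -\infty$, which equals $f$ only when $f \equiv -\infty$, and that is indeed the only closed convex function attaining $-\infty$, so the biconditional survives; the cases $f \equiv \pm\infty$ are checked directly. Once the extended-real arithmetic and the convention that the constant $-\infty$ is (improperly) closed convex are pinned down, the argument is complete.
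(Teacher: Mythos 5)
Your argument is the standard one, and it is exactly the route the paper is implicitly taking: the paper offers no proof beyond citing Rockafellar's Theorem~12.2, prefaced by the quoted fact \cite[12.1]{rockafellar} that a closed convex function is the supremum of its affine minorants --- which is the sole nontrivial input you use. Both directions are handled correctly for proper $f$, and the ``sup of affine functions'' argument for closedness of $f^*$ is right. The one shaky spot is precisely the one you flag and then dispatch by assertion: the claim that the constant $-\infty$ is the only \emph{closed} convex function attaining $-\infty$ is true under Rockafellar's convention (the closure of an improper convex function taking the value $-\infty$ is \emph{defined} to be the constant $-\infty$), but false under the paper's stated definition of closedness as closedness of the epigraph: the function equal to $-\infty$ on $[0,1]$ and $+\infty$ elsewhere has closed, convex epigraph $[0,1]\times\R$, yet $f^*\equiv+\infty$ and $f^{**}\equiv-\infty\neq f$. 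So in the improper case the biconditional survives only with Rockafellar's notion of $\cl$ (his 12.2 is really $f^{**}=\cl f$), which is presumably what the citation intends; for proper $f$ the two notions of closedness coincide and your proof is complete as written.
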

For arbitrary functions $f$, the operation $f \mapsto f^{**}$ is a closure operator which we denote by $\cl(f)$. This is the largest closed convex function majorized by $f$. 

\begin{example}\label{prop:ex_abs}
The absolute value function $f(x) = |x|$ is convex and closed. The supremum $\supp {x} {cx-|x|}$ equals $0$ if $|c| \leq 1$, and $\infty$ otherwise. Hence $f^*(c) = \iv{|c| \leq 1}$, and $f^{**}=f$.
\end{example}

\begin{example}
Let $f(x) = ax^2$ for $a > 0$. Then $x \mapsto c \cdot x - a x^2$ is differentiable and has a maximum at $x=c/2a$. We obtain $f^*(c) = \frac 1 {4a} c^2$. In particular, we see that the function $f(x) = \frac 1 2 x^2$ is a fixed point of the Legendre transform.
\end{example}

\begin{proposition}[{\cite[Theorem~16.4]{rockafellar}}]
If $f, g$ are closed convex functions, then under certain regularity conditions $(f \conv g)^* = f^* + g^*$ and $(f + g)^* = f^* \conv g^*$.
\end{proposition}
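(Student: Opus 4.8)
The plan is to prove the two identities in sequence. First I would establish $(f \conv g)^* = f^* + g^*$ by a direct double-supremum calculation; this step needs no closedness hypothesis, only that $f$ and $g$ are proper, so that no sum or infimal convolution below hits the forbidden value $(+\infty)+(-\infty)$. Then I would derive $(f+g)^* = f^* \conv g^*$ by feeding the first identity back through the biconjugation theorem (Proposition~\ref{prop:double_conjugate}).

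\textbf{Step 1.} Unfolding the definitions and pushing the minus sign through the inner infimum gives
\[ (f \conv g)^*(x^*) = \supp{x}{\langle x^*,x\rangle - \inff{y}{f(x-y)+g(y)}} = \sup_{x,y}\,\bigl\{\langle x^*,x\rangle - f(x-y) - g(y)\bigr\}. \]
Reparametrising by $u := x-y$ and using bilinearity, $\langle x^*, u+y\rangle = \langle x^*, u\rangle + \langle x^*, y\rangle$, the joint supremum decouples:
\[ \sup_{u,y}\,\bigl\{\langle x^*,u\rangle - f(u) + \langle x^*,y\rangle - g(y)\bigr\} = \supp{u}{\langle x^*,u\rangle - f(u)} + \supp{y}{\langle x^*,y\rangle - g(y)} = f^*(x^*) + g^*(x^*), \]
and if $f \conv g$ attains the value $-\infty$ somewhere, both sides collapse to the constant $+\infty$, so the identity still holds.

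\textbf{Step 2.} Since $f$ and $g$ are closed convex, Proposition~\ref{prop:double_conjugate} gives $f^{**}=f$, $g^{**}=g$, and also tells us $f^*, g^*$ are themselves closed convex. Applying Step~1 to the pair $(f^*, g^*)$ yields $(f^* \conv g^*)^* = f^{**} + g^{**} = f+g$. Conjugating once more and using that double conjugation is the closure operator $\cl$ (the remark after Proposition~\ref{prop:double_conjugate}),
\[ (f+g)^* = (f^* \conv g^*)^{**} = \cl(f^* \conv g^*). \]
Thus $(f+g)^* = f^* \conv g^*$ holds \emph{exactly} when $f^* \conv g^*$ is already a closed function.

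\textbf{The main obstacle.} Steps 1 and 2 are routine; the substance lies in that last closedness requirement. The infimal convolution of two closed proper convex functions need not be closed — its epigraph is, up to taking closures, the Minkowski sum $\epi(f^*) + \epi(g^*)$, and sums of closed convex sets can fail to be closed, equivalently the infimum defining $(f^* \conv g^*)(x^*)$ can fail to be attained. The regularity hypothesis of the theorem — in Rockafellar's formulation, that the relative interiors of the effective domains of $f$ and $g$ have a point in common — is precisely what rules this out, making $f^* \conv g^*$ a closed proper convex function (and the convolution exact) so that $\cl(f^* \conv g^*) = f^* \conv g^*$ and Step~2 closes. Pinning down this implication in full generality is the one genuinely analytic ingredient; in the concrete settings of Sections~\ref{sec:examples}--\ref{sec:gaussians} one instead verifies exactness (hence closedness) of the relevant infimal convolutions directly, so the abstract criterion is never needed there.
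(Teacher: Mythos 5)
The paper offers no proof of this proposition at all: it is imported verbatim as a citation of Rockafellar's Theorem~16.4, with the regularity conditions deliberately left unspecified (in line with the caveat of \Sec\ref{sec:intro_cx}). Your argument is essentially the standard one, and in fact the same strategy Rockafellar uses: the identity $(f \conv g)^* = f^* + g^*$ is an unconditional computation (your Step~1 is correct, including the handling of the $-\infty$ degeneracy under properness), and the companion identity $(f+g)^* = f^* \conv g^*$ follows by applying it to $f^*, g^*$ and biconjugating, which is valid because closedness of $f,g$ gives $f^{**}=f$, $g^{**}=g$ via Proposition~\ref{prop:double_conjugate}. Your reduction of the whole statement to the single question of whether $f^* \conv g^*$ is already closed (so that $\cl(f^* \conv g^*) = f^* \conv g^*$) is exactly the right way to isolate where the regularity hypothesis enters, and your identification of that hypothesis with Rockafellar's condition $\keyword{ri}(\keyword{dom}\,f) \cap \keyword{ri}(\keyword{dom}\,g) \neq \emptyset$ is correct. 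The one step you do not prove --- that this relative-interior condition forces the Minkowski sum $\epi(f^*) + \epi(g^*)$ to be closed and the infimal convolution to be exact --- is indeed the only genuinely analytic content, and you flag it honestly rather than papering over it; given that the proposition as stated in the paper does not even name the regularity conditions, this level of detail already exceeds what the paper provides. If you wanted to close that last gap you would need the recession-cone argument for closedness of sums of convex sets, but for the purposes of this paper your proposal is complete and correct.
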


\section{Categories of Convex Bifunctions}\label{sec:bifn}

We now come to the central definition of this article, namely that of convex (or concave) \emph{bifunctions}. This concept is due to \cite{rockafellar} and scattered throughout his book. 

A \emph{bifunction} $F$ from $\R^m$ to $\R^n$ is the convex analysis terminology for a curried function $\R^m \to (\R^n \to \exR)$. The uncurried function $\graph{F} : \R^{m+n} \to \exR$ is referred to as the \emph{graph function} of $F$. We will suppress the partial application and write $F(x)(y)$ and $F(x,y)$ interchangeably.

\begin{definition}
A bifunction $F$ from $\R^m$ to $\R^n$ is called \emph{convex} (or \emph{concave}, \emph{closed}) if its graph function $\graph{F} : \R^{m + n} \to \exR$ has that property. The closure operation $cl(F)$ is applied on the level of graph functions. We denote a convex bifunction by $F : \R^m \cxto \R^n$ and a concave bifunction by $F : \R^m \cvto \R^n$.
\end{definition}

Bifunction composition is known as \emph{product} in \cite[\S~38]{rockafellar}.

\begin{definition}[Categories of bifunctions]
We define a category $\cxbifn$ of convex bifunctions as follows
\begin{itemize}
\item objects are the spaces $\R^n$
\item morphisms are convex bifunctions $\R^m \cxto \R^n$ 
\item the identity $\R^n \cxto \R^n$ is given by the indicator function
\[ \id_n(x,y) =\iv{x=y} \]
\item composition is infimization over the middle variable
\[ (F \cxcirc G)(x,z) = \inff y {G(x,y) + F(y,z)} \]
\end{itemize}
Analogously, the category $\cvbifn$ of concave bifunctions is defined as
\begin{itemize}
\item objects are the spaces $\R^n$
\item morphisms are concave bifunctions $\R^m \cvto \R^n$ 
\item the identity $\R^n \cvto \R^n$ is given by the concave indicator function
\[ -\id_n(x,y) = -\iv{x=y} \]
\item composition is supremization over the middle variable
\[ (F \cvcirc G)(x,z) = \supp y {G(x,y) + F(y,z)} \]
\end{itemize}
\end{definition}
\begin{proof}[of well-definedness]
This construction is a subcategory of the the category of weighted relations $\catname{Rel}(Q)$ taking values in a commutative quantale $Q$ \cite{bolt2019,coecke2018,marsden2017}, where $Q=\exR$ are the extended reals. It suffices to verify that convex bifunctions are closed under composition, tensor (addition) and contain the identities (\cite[p.~408]{rockafellar}).
\end{proof}

We will write bifunction composition as $F \circ G$ when it is clear from context whether we use the convex or concave variety. We will write $I$ for the unit space $\R^0$, and $\z$ for its unique element. 

\begin{example}
The \emph{states} (morphisms $I \cxto \R^n$ out of the unit) are in bijection with convex functions $f : \R^n \to \exR$, as are the \emph{effects} $\R^n \cxto I$. States and effects in $\cvbifn$ are in bijection with concave functions $f : \R^n \to \exR$.
\end{example}

\subsection{Duality for Bifunctions}\label{sec:bifn_duality}

Unless otherwise stated, theorems phrased for convex bifunctions will hold for concave bifunctions by selecting the appropriate versions of the operations.

The duality theory of convex functions extends to bifunctions as follows. 

\begin{definition}[{\cite[\S 30]{rockafellar}}]
The \emph{adjoint} of a convex bifunction $F : \R^m \cxto \R^n$ is the \emph{concave} bifunction $F^* : \R^n \cvto \R^m$ defined by
\[ F^*(y^*,x^*) = \inff {x,y} {F(x,y) + \langle x^*,x\rangle - \langle y^*,y\rangle } \]
The adjoint of a concave bifunction is convex and uses $\sup$ instead of $\inf$. The adjoint of the convex bifunction $F$ is related to the conjugate of its graph function $\graph{F}$ using the formula $F^*(y^*,x^*) = -\graph{F}^*(-x^*,y^*)$. (Note the slight asymmetry that one input is negated)
\end{definition}

The analogue of Proposition~\ref{prop:double_conjugate} for bifunctions is as follows

\begin{proposition}[{\cite[Theorem~30.1]{rockafellar}}]
For any convex bifunction $F$, the adjoint $F^*$ is a closed concave bifunction, and we have $F^{**} = \cl(F)$. In particular, if $F$ is a closed convex bifunction, then $F^{**} = F$.
\end{proposition}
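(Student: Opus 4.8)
The plan is to reduce the whole statement to the corresponding fact for ordinary convex functions, Proposition~\ref{prop:double_conjugate}, using the bridge identity $F^*(y^*,x^*) = -\graph{F}^*(-x^*,y^*)$ already recorded above, which expresses the adjoint of a bifunction in terms of the Legendre--Fenchel conjugate of its graph function. The point is that this identity presents $\graph{F^*}$ as the closed convex function $\graph{F}^*$, precomposed with the linear isomorphism $(y^*,x^*) \mapsto (-x^*,y^*)$ and negated; everything then follows by tracking this transformation through the known conjugation theory.

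First I would treat closedness. By Proposition~\ref{prop:double_conjugate}, $\graph{F}^*$ is a closed convex function for \emph{any} convex $F$. Closedness of a function means its epigraph is a closed subset of $\R^{m+n+1}$ (equivalently, lower semicontinuity); this property is stable under precomposition with a linear isomorphism, since such a map is a homeomorphism, and negation exchanges epigraph with hypograph, so it turns "closed convex" into "closed concave" (upper semicontinuous with closed hypograph). Hence the bridge identity shows $\graph{F^*}$ is a closed concave function, i.e. $F^*$ is a closed concave bifunction. The same argument, with $\sup$ and $\inf$ interchanged and the analogous bridge identity for concave bifunctions, shows that the adjoint of a concave bifunction is a closed convex bifunction.

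Next I would compute $F^{**}$ by composing the two bridge identities. One application expresses $\graph{F^*}$ through $\graph{F}^*$; applying the concave-side bridge identity to $F^*$ then expresses $\graph{F^{**}}$ through the double conjugate $(\graph{F}^*)^* = \graph{F}^{**}$. The linear substitutions on the covector arguments and the two successive negations must be bookkept carefully, but the asymmetric sign in the definition of the adjoint is tuned precisely so that the two coordinate substitutions compose to the identity and the two negations cancel, leaving $\graph{F^{**}} = \graph{F}^{**}$. Now Proposition~\ref{prop:double_conjugate} gives $\graph{F}^{**} = \cl(\graph{F})$, and since the closure of a bifunction is by definition the closure applied on graph functions, this reads $\graph{F^{**}} = \graph{\cl(F)}$, i.e. $F^{**} = \cl(F)$. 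The "in particular" clause is then immediate, since $F$ closed means $\cl(F)=F$.

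The step I expect to be the main obstacle is the composition of the two adjoint operations: one has to get right both the sign convention in the definition of the adjoint (the asymmetric negation of one input) and the fact that the concave adjoint is built with $\sup$ while the concave conjugate of the graph function is built with $\inf$, so there are two essentially independent places where a sign can go astray. Everything else is either a direct appeal to Proposition~\ref{prop:double_conjugate} or the soft observation that closedness is a topological condition preserved by the linear coordinate changes and negations involved. This mirrors Rockafellar's proof of his Theorem~30.1, and making the bookkeeping fully explicit in a concrete setting is exactly the kind of detail we defer as discussed in \Sec\ref{sec:intro_cx}.
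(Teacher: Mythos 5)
Your proposal is correct and follows essentially the same route as the source: the paper gives no proof of this proposition, deferring entirely to Rockafellar's Theorem~30.1, whose argument is precisely your reduction to Proposition~\ref{prop:double_conjugate} via the bridge identity $F^*(y^*,x^*) = -\graph{F}^*(-x^*,y^*)$. The sign bookkeeping you flag does close as claimed: substituting the bridge identity into the sup-based adjoint of the concave bifunction $F^*$ and changing variables via $(u,v)=(-x^*,y^*)$ turns the two negations and the coordinate swap into the identity, yielding $\graph{F^{**}} = \graph{F}^{**} = \cl(\graph{F})$.
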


\begin{theorem}[{\cite[Theorem~38.5]{rockafellar}}]\label{thm:adjoint_functoriality}
Under regularity assumptions, the adjoint operation respects composition. That is, for $F : \R^m \cxto \R^n$ and $G : \R^n \cxto \R^k$, we have
\[ (G \cxcirc F)^* = F^* \cvcirc G^* \]
That is, the adjoint operation defines a pair of mutually inverse functors
\[\begin{tikzcd}[ampersand replacement=\&]
	{\cxbifn^\op} \&\& \cvbifn
	\arrow["{(-)^*}", curve={height=-6pt}, dashed, from=1-1, to=1-3]
	\arrow["{(-)^*}", curve={height=-6pt}, dashed, from=1-3, to=1-1]
\end{tikzcd}\]
We indicate with dashed arrows that the functoriality depends on regularity assumptions.
\end{theorem}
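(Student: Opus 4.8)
The plan is to verify the identity $(G \cxcirc F)^* = F^* \cvcirc G^*$ directly from the definitions, recognizing it as a minimax (no--duality--gap) statement, and then to assemble the functoriality from this together with two easy checks. First I would unfold both sides. Expanding the adjoint and the composition $\cxcirc$, the left-hand side is a single joint infimum
\[ (G \cxcirc F)^*(z^*, x^*) = \inff{x,y,z}{F(x,y) + G(y,z) + \langle x^*, x\rangle - \langle z^*, z\rangle} \]
over $x \in \R^m$, $y \in \R^n$, $z \in \R^k$. Expanding $F^*$ and $G^*$ and then the concave composition $\cvcirc$ (a supremum over a middle variable) I would arrive at
\[ (F^* \cvcirc G^*)(z^*, x^*) = \supp{y^*}{\inff{x,y,y',z}{F(x,y') + G(y,z) + \langle x^*, x\rangle - \langle z^*, z\rangle + \langle y^*, y - y'\rangle}}, \]
where the auxiliary variable $y^* \in \R^n$ couples the two copies $y,y'$ of the middle variable like a Lagrange multiplier. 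This step is pure bookkeeping, but the asymmetric sign convention of the adjoint must be tracked carefully; the relation $F^*(y^*,x^*) = -\graph{F}^*(-x^*,y^*)$ is a convenient aid here.

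Comparing the two displays, the claim is exactly that $\sup_{y^*}\inf = \inf\sup$ for this integrand. One inequality is unconditional: restricting the inner infimum to the diagonal $y' = y$ annihilates the multiplier term and yields the left-hand side (now independent of $y^*$), giving $F^* \cvcirc G^* \le (G \cxcirc F)^*$; dually, $\sup_{y^*}\langle y^*, y-y'\rangle$ equals $0$ when $y = y'$ and $+\infty$ otherwise, so $\inf\sup$ of the same integrand again equals the left-hand side. What remains is the interchange $\sup_{y^*}\inf \ge \inf\sup$. The integrand is jointly convex in $(x,y,y',z)$, since $F$ and $G$ are convex bifunctions, and affine — hence concave — in $y^*$, so this is a minimax assertion; I would obtain it from Sion's minimax theorem, or, closer to the source, by rewriting the convex side at the level of graph functions as the sum $\graph{F}(x,y) + \graph{G}(y,z)$ followed by an infimal projection over $y$ and invoking the Fenchel-type duality $(p+q)^* = p^* \conv q^*$ (the bifunction form of \cite[Theorem~16.4]{rockafellar}). \textbf{This minimax step is the main obstacle}, and it is precisely where the regularity hypotheses enter: without a constraint-qualification condition relating the relative interiors of the domains of $\graph{F}$ and $\graph{G}$, there is a genuine duality gap and the equality fails, which is why the functoriality must be flagged as conditional.

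Finally, to package this as a statement about functors, I would add the (easy) observations that $(-)^*$ preserves identities and the monoidal structure. The adjoint of the indicator $\id_n$ of the diagonal is the concave indicator $-\id_n$ of the diagonal (a one-line computation), and the adjoint of a tensor $F \otimes G$ splits as $F^* \otimes G^*$ because the defining infimum decouples over the two factors with no shared variables. Thus, under the regularity assumptions, $(-)^*$ is a functor $\cxbifn^{\op} \to \cvbifn$, and symmetrically one gets a functor $\cvbifn^{\op} \to \cxbifn$; contravariance in composition is the content established above. That the two are mutually inverse then follows from the already-cited fact $F^{**} = \cl(F)$ (\cite[Theorem~30.1]{rockafellar}): on closed bifunctions $\cl$ is the identity, so restricting to the closed subcategories makes the two functors literal inverses of one another.
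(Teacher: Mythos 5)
Your proposal is correct, and it supplies a proof that the paper itself does not: Theorem~\ref{thm:adjoint_functoriality} is stated purely as a citation of \cite[Theorem~38.5]{rockafellar}, with only an informal remark about which regularity hypotheses are required. Your unfolding of both sides is accurate (the sign bookkeeping works out: the $y$-terms contributed by $G^*$ and $F^*$ combine into the multiplier term $\langle y^*, y-y'\rangle$), the unconditional inequality $F^*\cvcirc G^* \le (G\cxcirc F)^*$ is exactly weak duality, and you correctly isolate the reverse interchange as the step that consumes the hypotheses --- these are indeed the relative-interior conditions on the effective domains of $\graph{F}$ and $\graph{G}$ (plus properness and closedness) that Rockafellar imposes, and without them there is a genuine duality gap. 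One caution: Sion's minimax theorem does not apply off the shelf, since neither variable ranges over a compact set; your second route --- write $\graph{G\cxcirc F}$ as an infimal projection of $\graph{F}+\graph{G}$, use that conjugation turns infimal projection into evaluation of the dual variable at $0$ (an exact, hypothesis-free step), and then invoke $(p+q)^* = p^*\conv q^*$ --- is the one that actually closes the gap, and is essentially Rockafellar's own argument. A small point absorbed by the same hypotheses: in general one only has $(p+q)^* = \cl(p^*\conv q^*)$ with the infimal convolution not necessarily attained, so the regularity condition is needed both to drop the closure and to ensure the resulting formula is $F^*\cvcirc G^*$ on the nose. Your handling of identities and of the mutual-inverse claim via $F^{**}=\cl(F)$ restricted to closed bifunctions is also correct and matches the paper's surrounding propositions.
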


For the interested reader, the regularity assumptions in Theorem~\ref{thm:adjoint_functoriality} include closedness, as well as properness and certain (relative interiors of) domains of the involved bifunctions intersecting \cite[\S~38]{rockafellar}. These assumptions are not necessary conditions.

As a corollary of functoriality, we can derive the following well-known fact:
\begin{corollary}[Fenchel duality]
Let $f : \R^n \to \exR$ be convex, $g : \R^n \to \exR$ concave, and let $f^*,g^*$ be their convex and concave conjugates respectively. Then under sufficient regularity assumptions, we have
\[ \inff x {f(x) - g(x)} = \supp {x^*} {g^*(x^*) - f^*(x^*)} \]
\end{corollary}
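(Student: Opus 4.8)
The plan is to exhibit both sides of the claimed identity as the scalar obtained by evaluating a bifunction $I \to I$ at $(\z,\z)$, and then to move between the two sides using functoriality of the adjoint, Theorem~\ref{thm:adjoint_functoriality}. In categorical terms, Fenchel duality should just say that ``composing $f$ with $g$'' and then taking the adjoint yields the same scalar as taking adjoints first and then composing.

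Concretely, since $g$ is concave the function $-g$ is convex, so it determines a convex state $h : I \cxto \R^n$ with $h(\z,x) = -g(x)$, while $f$ determines a convex effect $e : \R^n \cxto I$ with $e(x,\z) = f(x)$. Unfolding the definition of $\cxcirc$, the scalar bifunction $e \cxcirc h : I \cxto I$ satisfies $(e \cxcirc h)(\z,\z) = \inff{x}{h(\z,x) + e(x,\z)} = \inff{x}{f(x) - g(x)}$, the left-hand side. A scalar bifunction $I \cxto I$ is simply an element of $\exR$, and the adjoint formula gives $F^*(\z,\z) = F(\z,\z)$ for every such $F$ (there are no variables to extremize over); hence the left-hand side also equals $(e \cxcirc h)^*(\z,\z)$.

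Next I would apply Theorem~\ref{thm:adjoint_functoriality}, which (under its regularity hypotheses) gives $(e \cxcirc h)^* = h^* \cvcirc e^*$, so that the left-hand side equals $(h^* \cvcirc e^*)(\z,\z) = \supp{x^*}{e^*(\z,x^*) + h^*(x^*,\z)}$, where $x^*$ ranges over the middle object $\R^n$. It then remains to identify the concave state $e^* : I \cvto \R^n$ and the concave effect $h^* : \R^n \cvto I$. Using the relation $F^*(y^*,x^*) = -\graph{F}^*(-x^*,y^*)$ between the bifunction adjoint and the conjugate of the graph function, together with the sign identity $g^*(x^*) = -(-g)^*(-x^*)$ relating the concave conjugate of $g$ to the convex conjugate of $-g$, one computes $e^*(\z,x^*) = -f^*(-x^*)$ and $h^*(x^*,\z) = g^*(-x^*)$, where now $f^*$ and $g^*$ denote the convex and concave conjugates. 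Substituting these in and reindexing $x^* \mapsto -x^*$ turns the supremum into $\supp{x^*}{g^*(x^*) - f^*(x^*)}$, the right-hand side.

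The only delicate point is the sign bookkeeping caused by the asymmetric negation in the definition of the adjoint; routing the identification of $e^*$ and $h^*$ through the graph-function formula (rather than the raw $\inf$/$\sup$ definitions) keeps this manageable, and once the formulas are in hand the two evaluations agree exactly. The ``sufficient regularity assumptions'' in the statement are precisely those under which Theorem~\ref{thm:adjoint_functoriality} applies to the composite $e \cxcirc h$ --- closedness and properness of $f$ and $g$, plus the relative-interior domain condition --- so no further hypotheses are needed.
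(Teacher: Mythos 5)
Your proof is correct and follows essentially the same route as the paper: encode the left-hand side as the scalar $(\text{effect})\cxcirc(\text{state})$ evaluated at $(\z,\z)$, note the adjoint is the identity on scalars, apply Theorem~\ref{thm:adjoint_functoriality}, and identify the adjoint state/effect with the conjugates $f^*,g^*$. The only (cosmetic) difference is that you make $-g$ the state and $f$ the effect, whereas the paper does the reverse; the paper's choice lines up the signs in the adjoint formula with the definitions of the convex and concave conjugates directly, avoiding the $x^*\mapsto -x^*$ reindexing you need at the end.
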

\begin{proof}
Consider the convex function $h=-g$ and form the state $s_f : I \cxto \R^n, s_f(\z,x) = f(x)$ and effect $e_h : \R^n \cxto I, e_h(x,\z) = h(x)$. The proof proceeds by using functoriality to compute the scalar $(e_h \cxcirc s_f)^* = (s_f^* \cvcirc e_h^*)$ in two ways: On the one hand, we have
\[ (e_h \cxcirc s_f)(\z,\z) = \inff x {s_f(\z,x) + e_h(x,\z)} = \inff x {f(x)-g(x)} \]
On the other hand, we express the adjoints in terms of the conjugates $f^*,g^*$
\begin{align*}
s_f^*(x^*,\z) &= \inff x {s_f(\z,x) - \langle x^*,x \rangle} = -f^*(x^*) \\
e_h^*(\z,x^*) &= \inff x {e_h(x,\z) + \langle x^*,x \rangle} = g^*(x^*)
\end{align*}
The adjoint acts as the identity on scalars, so we obtain
\[ \inff x {f(x)-g(x)} = (e_h \cxcirc s_f)^*(\z,\z) = (s_f^* \cvcirc e_h^*)(\z,\z) = \supp {x} {g^*(x^*) - f^*(x^*)}\]
\end{proof}

\subsection{Hypergraph Structure and Symmetries}

Bifunctions can not only be composed in sequence, but also in parallel. The relevant structure is that of a symmetric monoidal category $(\mathbb C, \otimes, I)$. In this work, we are dealing with a particular simple form of such categories called a \emph{prop}. A prop $\mathbb C$ is a strict monoidal category which is generated by a single object $R$ so that every object is of the form $R^{\otimes n}$ for some $n \in \mathbb N$. The monoid of objects $(\keyword{ob}(\mathbb C), \otimes, I)$ is thus isomorphic to $(\mathbb N,+,0)$.

\begin{proposition}
Convex bifunctions have the structure of a prop, generated by the object $\R$
\begin{enumerate}
\item The tensor is $\R^m \otimes \R^n = \R^{m + n}$
\item The unit is $I = \R^0$. 
\item The tensor of bifunctions is given by addition: If $F : \R^{m_1} \cxto \R^{n_1}$, $G : \R^{m_2} \cxto \R^{n_2}$ then $F \otimes G : \R^{m_1+m_2} \cxto \R^{n_1 + n_2}$ is defined as
\[ (F \otimes G)((x_1,x_2),(y_1,y_2)) = F(x_1,y_1) + G(x_2,y_2) \]
\end{enumerate}
\end{proposition}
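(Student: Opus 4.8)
The plan is to verify directly that the data listed satisfies the axioms of a prop, following the standard recipe: show the proposed tensor is a bifunctor, exhibit the coherence (associator, unitors) as identities since we are claiming a \emph{strict} monoidal structure, and produce the symmetry together with its naturality and hexagon/involution equations. First I would check that $F \otimes G$ is again a convex bifunction: its graph function is $\graph{F \otimes G}((x_1,x_2),(y_1,y_2)) = \graph{F}(x_1,y_1) + \graph{G}(x_2,y_2)$, which is a sum of convex functions precomposed with a coordinate projection (a linear map), hence convex; this is exactly the closure-under-addition fact invoked in the proof of well-definedness, and in the concave case one uses that a sum of concave functions is concave. The unit object is $\R^0$ and its identity bifunction $\id_0$ is the empty indicator $\iv{\z=\z}=0$, so $F \otimes \id_0 = F$ on the nose.

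Next I would check bifunctoriality, i.e.\ that $\otimes$ respects composition and identities. On identities this is immediate: $\id_m \otimes \id_n ((x_1,x_2),(y_1,y_2)) = \iv{x_1=y_1} + \iv{x_2=y_2} = \iv{(x_1,x_2)=(y_1,y_2)} = \id_{m+n}$. For the interchange law $(F_1 \cxcirc G_1) \otimes (F_2 \cxcirc G_2) = (F_1 \otimes F_2) \cxcirc (G_1 \otimes G_2)$, one unfolds both sides: the left side is $\inf_{y_1}\{G_1(x_1,y_1)+F_1(y_1,z_1)\} + \inf_{y_2}\{G_2(x_2,y_2)+F_2(y_2,z_2)\}$, and the right side is $\inf_{(y_1,y_2)}\{G_1(x_1,y_1)+G_2(x_2,y_2)+F_1(y_1,z_1)+F_2(y_2,z_2)\}$; these agree because an infimum of a separable sum over a product domain splits as the sum of the infima (with the usual $\exR$-arithmetic conventions, and noting that the partial application conventions make the middle variables match up). The associativity and unit coherence of $\otimes$ on morphisms reduce to associativity and neutrality of $+$ on the graph functions, hence hold strictly once objects are identified via $(\mathbb N,+,0)$.

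Finally I would supply the symmetry $\sigma_{m,n} : \R^{m+n} \cxto \R^{n+m}$, namely the indicator $\sigma_{m,n}((x_1,x_2),(y_1,y_2)) = \iv{y_1 = x_2}\cdot$ — more precisely $\iv{(y_1,y_2)=(x_2,x_1)}$ — which is a convex (closed) bifunction, check $\sigma_{n,m}\cxcirc\sigma_{m,n} = \id_{m+n}$ by computing the infimization, and verify naturality $\sigma \cxcirc (F\otimes G) = (G\otimes F)\cxcirc\sigma$ and the hexagon, each of which again unfolds to a trivial reindexing of a separable infimum. Since $(\keyword{ob}(\cxbifn),\otimes,I)\cong(\mathbb N,+,0)$ is freely generated by $\R$, this establishes the prop structure; the concave case is identical with $\inf$ replaced by $\sup$ and indicators negated. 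I expect the only genuinely delicate point to be bookkeeping around the $(+\infty)+(-\infty)$ indeterminacy when splitting infima of separable sums — but convex (resp.\ concave) bifunctions never simultaneously take the value $-\infty$ on a proper function in the relevant way, so on the domain where things are finite the split is valid, and the $\exR$-quantale conventions of \cite{willerton,fujii2019} handle the boundary cases uniformly; this is also why the construction sits inside $\catname{Rel}(Q)$ for $Q = \exR$, from which the prop axioms could alternatively be inherited wholesale.
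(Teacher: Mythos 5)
Your proof is correct, but it takes a genuinely different route from the paper's. The paper's entire argument is a one-line appeal to the general theory of weighted relations: $\cxbifn$ sits inside $\catname{Rel}(Q)$ for the commutative quantale $Q=\exR$ \cite{marsden2017}, so the prop structure (interchange law, strict coherence, symmetry) is inherited wholesale once one knows that convex bifunctions are closed under tensor and composition and contain the identities --- which was already checked when the category was shown to be well defined. You instead verify the axioms by hand: convexity of the separable sum, $\id_m\otimes\id_n=\id_{m+n}$, interchange via splitting an infimum of a separable sum over a product domain, and the symmetry as the indicator bifunction of a coordinate swap (a linear, hence convex, relation). Both arguments are sound; yours is self-contained and makes visible exactly where the extended-real arithmetic enters, while the paper's is shorter and delegates all coherence bookkeeping to the quantale framework --- an alternative you explicitly acknowledge at the end. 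One small caution: your aside that convex bifunctions ``never simultaneously take the value $-\infty$ on a proper function in the relevant way'' is not quite the right justification, since improper bifunctions taking the value $-\infty$ are not excluded here; but this does no harm, because the quantale convention you fall back on (under which $+$ distributes over arbitrary infima, forcing $(+\infty)+(-\infty)=+\infty$ on the convex side) makes the identity $\inf_{y_1}a(y_1)+\inf_{y_2}b(y_2)=\inf_{y_1,y_2}\,(a(y_1)+b(y_2))$ hold unconditionally, which is all the interchange law needs.
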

\begin{proof}[of well-definedness]
General fact about categories of weighted relations $\catname{Rel}(Q)$ (\cite{marsden2017}).
\end{proof}

Symmetric monoidal categories are widely studied and admit a convenient graphical language using string diagrams \cite{selinger:graphical}. It is useful to consider further pieces of structure on such a category
\begin{enumerate}
\item in a \emph{copy-delete category} \cite{cho_jacobs}, every object carries the structure of a commutative comonoid $\keyword{copy}_X : X \to X \otimes X$ and $\keyword{discard}_X : X \to I$. This lets information be used in a non-linear way (in the sense of linear logic).
\item in a \emph{hypergraph category} \cite{fong2019hypergraph}, every object carries the structure of a special commutative Frobenius algebra
\end{enumerate}
Every hypergraph category is in particular a copy-delete category. The pieces of structure of a hypergraph category are often rendered as cups and caps in string diagrams

\[ \begin{tikzpicture}[scale=0.4]
	\begin{pgfonlayer}{nodelayer}
		\node [style=none] (0) at (12, 2) {};
		\node [style=bn] (1) at (10, 2) {};
		\node [style=none] (2) at (-10, 2) {};
		\node [style=bn] (3) at (-8, 2) {};
		\node [style=none] (4) at (-6, 3) {};
		\node [style=none] (5) at (-6, 1) {};
		\node [style=none] (6) at (3, 3) {};
		\node [style=none] (7) at (3, 1) {};
		\node [style=bn] (8) at (5, 2) {};
		\node [style=none] (9) at (7, 2) {};
		\node [style=none] (10) at (-3.25, 2) {};
		\node [style=bn] (11) at (-1.25, 2) {};
		\node [style=none] (12) at (-8, -1) {copy};
		\node [style=none] (13) at (-2.5, -1) {discard};
		\node [style=none] (14) at (5, -1) {multiply};
		\node [style=none] (15) at (11, -1) {unit};
	\end{pgfonlayer}
	\begin{pgfonlayer}{edgelayer}
		\draw (0.center) to (1);
		\draw (2.center) to (3);
		\draw [in=180, out=90] (3) to (4.center);
		\draw [in=-180, out=-90] (3) to (5.center);
		\draw [in=-90, out=0] (7.center) to (8);
		\draw [in=0, out=90] (8) to (6.center);
		\draw (8) to (9.center);
		\draw (11) to (10.center);
	\end{pgfonlayer}
\end{tikzpicture}
 \]

subject to equations such as the Frobenius law

\[ \begin{tikzpicture}[scale=0.3]
	\begin{pgfonlayer}{nodelayer}
		\node [style=none] (2) at (-12, -1) {};
		\node [style=bn] (3) at (-10, -1) {};
		\node [style=none] (4) at (-8, 0) {};
		\node [style=none] (5) at (-8, -2) {};
		\node [style=none] (6) at (-8, 2) {};
		\node [style=none] (7) at (-8, 0) {};
		\node [style=bn] (8) at (-6, 1) {};
		\node [style=none] (9) at (-4, 1) {};
		\node [style=none] (10) at (-12, 2) {};
		\node [style=none] (11) at (-4, -2) {};
		\node [style=none] (12) at (4.25, 0) {};
		\node [style=bn] (13) at (6.25, 0) {};
		\node [style=none] (14) at (8.25, 1) {};
		\node [style=none] (15) at (8.25, -1) {};
		\node [style=none] (16) at (8.25, 1) {};
		\node [style=none] (17) at (0.25, -1) {};
		\node [style=none] (18) at (0.25, 1) {};
		\node [style=none] (19) at (0.25, -1) {};
		\node [style=bn] (20) at (2.25, 0) {};
		\node [style=none] (21) at (4.25, 0) {};
		\node [style=none] (22) at (12, 1) {};
		\node [style=bn] (23) at (14, 1) {};
		\node [style=none] (24) at (16, 2) {};
		\node [style=none] (25) at (16, 0) {};
		\node [style=none] (26) at (16, 2) {};
		\node [style=none] (27) at (16, -2) {};
		\node [style=none] (28) at (16, 0) {};
		\node [style=none] (29) at (16, -2) {};
		\node [style=bn] (30) at (18, -1) {};
		\node [style=none] (31) at (20, -1) {};
		\node [style=none] (32) at (12, -2) {};
		\node [style=none] (33) at (20, 2) {};
		\node [style=none] (34) at (-2, 0) {=};
		\node [style=none] (35) at (10, 0) {=};
	\end{pgfonlayer}
	\begin{pgfonlayer}{edgelayer}
		\draw (2.center) to (3);
		\draw [in=180, out=90] (3) to (4.center);
		\draw [in=-180, out=-90] (3) to (5.center);
		\draw [in=-90, out=0] (7.center) to (8);
		\draw [in=0, out=90] (8) to (6.center);
		\draw (8) to (9.center);
		\draw (10.center) to (6.center);
		\draw (11.center) to (5.center);
		\draw (12.center) to (13);
		\draw [in=180, out=90] (13) to (14.center);
		\draw [in=-180, out=-90] (13) to (15.center);
		\draw [in=-90, out=0] (19.center) to (20);
		\draw [in=0, out=90] (20) to (18.center);
		\draw (20) to (21.center);
		\draw (22.center) to (23);
		\draw [in=180, out=90] (23) to (24.center);
		\draw [in=-180, out=-90] (23) to (25.center);
		\draw [in=-90, out=0] (29.center) to (30);
		\draw [in=0, out=90] (30) to (28.center);
		\draw (30) to (31.center);
		\draw (32.center) to (29.center);
		\draw (33.center) to (26.center);
	\end{pgfonlayer}
\end{tikzpicture}
 \]

This gives rise to a rich graphical calculus, which has been explored for a number of engineering applications like signal-flow diagrams or electrical circuits \cite{graphical_la,bsz,bonchi:cat_signal_flow,bonchi:interacting_hopf,baez:control,baez:props}

\begin{proposition}
$\cxbifn$ has the structure of a hypergraph category in two different ways, which we call the additive and co-additive structure. That is, every object carries two different structures as a special commutative Frobenius algebra 
\begin{enumerate}
\item The additive structure is given by
\begin{align*}
\keyword{unit} : I \cxto \R^n,  &\qquad  \keyword{unit}(\z,x) = 0 \\
\keyword{discard} : \R^n \cxto I, &\qquad  \keyword{discard}(x,\z) = 0 \\
\keyword{copy} : \R^n \cxto \R^n \otimes \R^n &\qquad  \keyword{copy}(x,y,z) = \iv{x=y=z} \\
\keyword{comp} : \R^n \otimes \R^n \cxto \R^n, &\qquad  \keyword{comp}(x,y,z) = \iv{x=y=z} 
\end{align*}
\item The co-additive structure is given by
\begin{align*}
\keyword{zero} : I \cxto \R^n,  &\qquad  \keyword{zero}(\z,x) = \iv{x=0} \\
\keyword{cozero} : \R^n \cxto I, &\qquad  \keyword{cozero}(x,\z) = \iv{x=0} \\
\keyword{add} : \R^n \otimes \R^n \cxto \R^n, &\qquad  \keyword{add}(x,y,z) = \iv{x+y=z} \\
\keyword{coadd} : \R^n \cxto \R^n \otimes \R^n, &\qquad  \keyword{coadd}(z,x,y) = \iv{x+y=z} 
\end{align*}
\end{enumerate}
The analogous structures on $\cvbifn$ use concave indicator functions instead. 
\end{proposition}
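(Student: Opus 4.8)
The plan is to obtain both Frobenius structures at once by transporting them along a structure-preserving functor from the prop $\catname{LinRel}$ of linear relations --- the prop underlying graphical linear algebra \cite{graphical_la,bonchi:interacting_hopf} --- in which the two required structures are classical. The first observation is that every morphism listed in the statement is the indicator bifunction $\iv{(x,y)\in L}$ of a linear subspace $L$: the diagonal $\{(x,x,x)\}$ for $\keyword{copy}$ and $\keyword{comp}$, the graph of addition $\{(x,y,z):x+y=z\}$ for $\keyword{add}$ and $\keyword{coadd}$, the zero subspace $\{x=0\}$ for $\keyword{zero}$ and $\keyword{cozero}$, and the total relations for $\keyword{unit}$ and $\keyword{discard}$. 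All such indicators are convex and closed, since linear subspaces are closed convex sets, so these bifunctions genuinely lie in $\cxbifn$.

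Next I would check that $L \mapsto \iv{(x,y)\in L}$ extends to a morphism of props $\Phi : \catname{LinRel} \to \cxbifn$. The only point needing a computation is preservation of composition: for linear relations $G \subseteq \R^m\times\R^n$ and $F \subseteq \R^n\times\R^k$,
\[ (\Phi F \cxcirc \Phi G)(x,z) = \inff{y}{\iv{(x,y)\in G} + \iv{(y,z)\in F}} \]
equals $0$ exactly when some $y$ witnesses both $(x,y)\in G$ and $(y,z)\in F$, and $+\infty$ otherwise; that is, it equals $\iv{(x,z)\in F\circ G}$, the indicator of the relational composite. No regularity hypothesis is needed here, since the only values occurring are $0$ and $+\infty$ and they are only ever added, never subtracted, so the $\infty-\infty$ pathology cannot arise --- this is a clean instance of composition. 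Preservation of the tensor (addition of indicators is the indicator of the product relation) and of identities ($\id_n$, which is $\iv{x=y}$, is the indicator of the diagonal) is immediate; $\Phi$ is moreover faithful, which is what underpins the conservativity statement of \Sec\ref{sec:examples}.

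Finally I would invoke the classical fact that in $\catname{LinRel}$ every object carries two special commutative Frobenius algebras: the \emph{copy} one, whose comonoid part is the diagonal relation together with the total relation onto $\R^0$ and whose monoid part consists of their converses; and the \emph{add} one, whose monoid part is the graph of addition together with the relation $\{(\z,0)\}$ and whose comonoid part consists of their converses. These are the two interacting Frobenius algebras of graphical linear algebra \cite{graphical_la,bonchi:interacting_hopf}. Since a morphism of props preserves every structure defined by generators and equations, $\Phi$ carries them to two special commutative Frobenius algebras in $\cxbifn$; unwinding the definitions identifies these with the additive and co-additive structures of the statement. The concave case is verbatim the same via the evident functor $\catname{LinRel}\to\cvbifn$, $L\mapsto -\iv{(x,y)\in L}$, using $\cvcirc$ and concave indicators.

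The content is transport of structure, so there is no serious obstacle: the only real bookkeeping is to match the variable orderings of the displayed generators with a chosen presentation of $\catname{LinRel}$. If one prefers a self-contained argument to citing graphical linear algebra, one can instead verify the (co)unit, (co)associativity, (co)commutativity, Frobenius and special laws for each structure directly --- by the composition formula above, each of these laws collapses to an elementary identity between linear subspaces, e.g. the special law $\keyword{comp}\cxcirc\keyword{copy}=\id_n$ is just $\{((x,x),x)\}\circ\{(x,(x,x))\}=\Delta$.
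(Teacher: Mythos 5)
Your proof is correct and matches the route the paper itself gestures at: the proposition is stated without an explicit proof, but the paper's later remark that the hypergraph structures are ``induced by linear maps'' (together with the $\catname{LinRel}$ embedding of Theorem~\ref{thm:linrel_functor}) is precisely your transport-of-structure argument along the indicator-bifunction prop morphism. The one computation that carries any content --- that indicator bifunctions of linear relations compose via infimization to the indicator of the relational composite, with no $\infty - \infty$ pathology --- is done correctly.
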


We can motivate the names of the hypergraph structures by observing how multiplications acts on states. This duality is clarified in what follows.
\begin{example}
Let $f,g : I \cxto \R^n$ be two states. Then
\begin{align*}
(\keyword{copy} \circ (f \otimes g))(z) &= \inff {x,y} {f(x) + g(y) + \iv{x=y=z}} = f(z) + g(z) \\
(\keyword{add} \circ (f \otimes g))(z) &= \inff {x,y} {f(x) + g(y) + \iv{x+y=z}} = f(z) \conv g(z) \\
\end{align*}
\end{example}

\begin{definition}
The \emph{dagger} of a bifunction $F : \R^m \cxto \R^n$ is given by reversing its arguments 
\[ F^\dagger : \R^n \cxto \R^m, F^\dagger(y,x) = F(x,y) \]
The \emph{inverse} of a bifunction $F : \R^m \cxto \R^n$ is the concave bifunction \cite[p.~384]{rockafellar}
\[ F_*(x,y) = -F(y,x) \]
Both these operations define involutive\footnote{i.e. applying the appropriate version of these operations twice is the identity} functors
\begin{align*}
(-)^\dagger : \cxbifn^\op \to \cxbifn, \qquad (-)_*  : \cxbifn^\op \to \cvbifn
\end{align*}
The functor $(-)^\dagger$ is a dagger functor in the sense of \cite{selinger2007dagger}.
\end{definition}

\begin{proposition}[{\cite[p.~384]{rockafellar}}]\label{prop:inv_adj_commute}
The operations of inverse and adjoint commute, i.e. for $F : \R^m \cxto \R^n$ we have $(F^*)_* = (F_*)^*$.
\end{proposition}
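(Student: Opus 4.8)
The plan is to prove the identity by straightforward unfolding. Both the adjoint and the inverse are defined by purely formal manipulations --- negation, swapping the two arguments, and dualizing the pairings --- so the equality $(F^*)_* = (F_*)^*$ should hold \emph{verbatim}, as an identity of bifunctions $\R^m \cxto \R^n$ valued in $\exR$, with no regularity hypotheses (in contrast to the functoriality of $(-)^*$ in Theorem~\ref{thm:adjoint_functoriality}, which does require closedness and properness). Concretely, I would fix dummy variables $x \in \R^m$, $y \in \R^n$ together with duals $x^* \in \R^m$, $y^* \in \R^n$, rewrite each of the two composites as an explicit extremization over $x,y$ of an affine-in-pairings expression built from $F(x,y)$, $\langle x^*,x\rangle$ and $\langle y^*,y\rangle$, and check that the two expressions agree sign for sign.

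For the left-hand side: $F^* : \R^n \cvto \R^m$ is the concave bifunction $F^*(y^*,x^*) = \inff{x,y}{F(x,y) + \langle x^*,x\rangle - \langle y^*,y\rangle}$, and forming its inverse swaps the two arguments and negates; pushing the minus sign through the infimum turns it into a supremum, giving
\[ (F^*)_*(x^*,y^*) \;=\; -F^*(y^*,x^*) \;=\; \supp{x,y}{-F(x,y) - \langle x^*,x\rangle + \langle y^*,y\rangle}. \]
For the right-hand side: the inverse $F_* : \R^n \cvto \R^m$ is the concave bifunction with $F_*(y,x) = -F(x,y)$ (here $y \in \R^n$ the input, $x \in \R^m$ the output), and its adjoint is the convex bifunction obtained from the $\sup$-version of the adjoint formula, $(F_*)^*(x^*,y^*) = \supp{x,y}{F_*(y,x) + \langle y^*,y\rangle - \langle x^*,x\rangle}$. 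Substituting $F_*(y,x) = -F(x,y)$ produces exactly the expression displayed above, which proves the claim.

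The only delicate point is the bookkeeping: keeping track of which of $x^*,y^*$ is the input coordinate and which is the output coordinate of each composite bifunction, and which pairing picks up the sign flip under the argument swap; once this is pinned down the match is immediate. An alternative, slightly more conceptual route is to argue at the level of graph functions, using that $\graph{F_*}$ is $-\graph{F}$ precomposed with the coordinate swap $\R^{n+m}\to\R^{m+n}$, that $F^*$ is (up to negating one of its inputs) the conjugate $\graph{F}^*$, and that Legendre--Fenchel conjugation commutes with permuting coordinates; both $(F^*)_*$ and $(F_*)^*$ then reduce to $-\graph{F}^*$ with the same sign on the inputs and the two blocks of variables transposed. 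Either way the verification is entirely formal and makes no appeal to closedness or properness.
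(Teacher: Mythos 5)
Your unfolding is correct: both sides reduce to $\supp{x,y}{-F(x,y) - \langle x^*,x\rangle + \langle y^*,y\rangle}$, the sign bookkeeping (input pairing with $+$, output pairing with $-$, and $-\inf = \sup(-\,\cdot)$ over $\exR$) checks out, and you are right that no closedness or properness is needed. The paper itself gives no proof, deferring to Rockafellar where the identity is noted as immediate from the definitions by exactly this computation, so your approach is essentially the intended one.
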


The composite operation $F^*_*$ defines another covariant functor $\cxbifn \to \cxbifn$, which we now interpret: As is customary in graphical linear algebra, we render the two hypergraph structures as follows
\begin{equation} \begin{tikzpicture}[scale=0.4]
	\begin{pgfonlayer}{nodelayer}
		\node [style=none] (0) at (3, 2) {};
		\node [style=bn] (1) at (1, 2) {};
		\node [style=none] (2) at (-10, 2) {};
		\node [style=bn] (3) at (-8, 2) {};
		\node [style=none] (4) at (-6, 3) {};
		\node [style=none] (5) at (-6, 1) {};
		\node [style=none] (6) at (6, 3) {};
		\node [style=none] (7) at (6, 1) {};
		\node [style=bn] (8) at (8, 2) {};
		\node [style=none] (9) at (10, 2) {};
		\node [style=none] (10) at (-2.75, 2) {};
		\node [style=bn] (11) at (-1, 2) {};
		\node [style=none] (12) at (-8, -1) {copy};
		\node [style=none] (13) at (-2, -1) {discard};
		\node [style=none] (14) at (8, -1) {comp};
		\node [style=none] (15) at (2, -1) {unit};
		\node [style=none] (16) at (3, -5) {};
		\node [style=wn] (17) at (1, -5) {};
		\node [style=none] (18) at (-10, -5) {};
		\node [style=wn] (19) at (-8, -5) {};
		\node [style=none] (20) at (-6, -4) {};
		\node [style=none] (21) at (-6, -6) {};
		\node [style=none] (22) at (6, -4) {};
		\node [style=none] (23) at (6, -6) {};
		\node [style=wn] (24) at (8, -5) {};
		\node [style=none] (25) at (10, -5) {};
		\node [style=none] (26) at (-3, -5) {};
		\node [style=wn] (27) at (-1, -5) {};
		\node [style=none] (28) at (-8, -8) {coadd};
		\node [style=none] (29) at (-2, -8) {cozero};
		\node [style=none] (30) at (8, -8) {add};
		\node [style=none] (31) at (2, -8) {zero};
	\end{pgfonlayer}
	\begin{pgfonlayer}{edgelayer}
		\draw (0.center) to (1);
		\draw (2.center) to (3);
		\draw [in=180, out=90] (3) to (4.center);
		\draw [in=-180, out=-90] (3) to (5.center);
		\draw [in=-90, out=0] (7.center) to (8);
		\draw [in=0, out=90] (8) to (6.center);
		\draw (8) to (9.center);
		\draw (11) to (10.center);
		\draw (16.center) to (17);
		\draw (18.center) to (19);
		\draw [in=180, out=90] (19) to (20.center);
		\draw [in=-180, out=-90] (19) to (21.center);
		\draw [in=-90, out=0] (23.center) to (24);
		\draw [in=0, out=90] (24) to (22.center);
		\draw (24) to (25.center);
		\draw (27) to (26.center);
	\end{pgfonlayer}
\end{tikzpicture}
 \label{eq:gla_generators} \end{equation}
We refer to the additive structure as `black' ($\black$) and the co-additive one as `white' ($\white$). This presentation reveals an array of symmetries (mirror-image and color-swap\footnote{we will discuss these symmetries in more detail in Section~\ref{sec:linearalgebra}}), which we are relating now:

\begin{theorem}
The adjoint operation interchanges the additive and co-additive structure. That is we have functors of hypergraph categories
\begin{align*}
(-)^* : (\cxbifn^\op, \black) \to (\cvbifn, \white)  \\
(-)^* : (\cxbifn^\op, \white) \to (\cvbifn, \black)  
\end{align*}
Note that the opposite of a hypergraph category is again a hypergraph category where cups and caps are interchanged.
\end{theorem}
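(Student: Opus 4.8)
The plan is to build on Theorem~\ref{thm:adjoint_functoriality}, which already provides (under the stated regularity reservations) a contravariant, composition-preserving functor $(-)^*\colon\cxbifn^\op\to\cvbifn$, and to upgrade this to a functor of hypergraph categories. A functor of hypergraph categories is a strong symmetric monoidal functor that carries the chosen special commutative Frobenius algebra on each object to the chosen one on its image; since we work in props and each Frobenius algebra is the tensor power of its structure on the generating object $\R$, it suffices to check (i) that $(-)^*$ is strong symmetric monoidal and (ii) that it sends the four black generators on $\R$ to the four white generators on $\R$, and symmetrically. One point of bookkeeping: passing to $\cxbifn^\op$ swaps the roles of multiplication and comultiplication --- this is precisely the observation that the opposite of a hypergraph category interchanges cups and caps --- so inside $(\cxbifn^\op,\black)$ the multiplication on $\R^n$ is $\keyword{comp}$ read backwards, i.e.\ $\keyword{copy}$, and the comultiplication is $\keyword{comp}$, with units $\keyword{discard}$ and $\keyword{unit}$. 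Hence the statement reduces to the identities $\keyword{copy}^*=\keyword{add}$, $\keyword{discard}^*=\keyword{zero}$, $\keyword{comp}^*=\keyword{coadd}$, $\keyword{unit}^*=\keyword{cozero}$ in $\cvbifn$, together with their color-swapped counterparts.

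For monoidality I would first check that $(-)^*$ preserves $\otimes$: substituting $F\otimes G$ into the adjoint formula, the objective $F(x_1,y_1)+G(x_2,y_2)+\langle x_1^*,x_1\rangle+\langle x_2^*,x_2\rangle-\langle y_1^*,y_1\rangle-\langle y_2^*,y_2\rangle$ splits into two blocks of variables that can be minimized independently, giving $F^*(y_1^*,x_1^*)+G^*(y_2^*,x_2^*)$. Preservation of the unit object is trivial since $(-)^*$ is the identity on objects, and the adjoint of a braiding permutation is the corresponding braiding permutation of $\cvbifn$. (The only subtlety, splitting an infimum of a sum into a sum of infima, is the familiar $(+\infty)+(-\infty)$ issue already covered by the standing reservation, and is automatic for the polyhedral bifunctions at hand.)

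The core of the argument is then one uniform computation for the generators. Plugging a black generator into $F^*(y^*,x^*)=\inff{x,y}{F(x,y)+\langle x^*,x\rangle-\langle y^*,y\rangle}$, the indicator inside $F$ forces the bound variables onto a single vector $v$, leaving $\inf_v\langle a,v\rangle$ for a linear form $a$ in the starred variables, which equals $0$ when $a=0$ and $-\infty$ otherwise. For $\keyword{copy}(x,(y,z))=\iv{x=y=z}$ one gets $a=x^*-y^*-z^*$, hence $\keyword{copy}^*((y^*,z^*),x^*)=-\iv{y^*+z^*=x^*}=\keyword{add}((y^*,z^*),x^*)$; the same pattern gives $\keyword{comp}^*=\keyword{coadd}$, while for $\keyword{unit}$ and $\keyword{discard}$ the surviving form is $-\langle y^*,\cdot\rangle$ respectively $\langle x^*,\cdot\rangle$, yielding $\keyword{unit}^*=\keyword{cozero}$ and $\keyword{discard}^*=\keyword{zero}$. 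This establishes the functor $(-)^*\colon(\cxbifn^\op,\black)\to(\cvbifn,\white)$. The second functor $(-)^*\colon(\cxbifn^\op,\white)\to(\cvbifn,\black)$ is obtained by running the identical computation on the white generators of $\cxbifn$: the constraint $\iv{x+y=z}$ in $\keyword{add}$ yields $\keyword{add}^*=\keyword{copy}$, similarly $\keyword{coadd}^*=\keyword{comp}$, and the constraints in $\keyword{zero}$ and $\keyword{cozero}$ collapse to the constant-zero bifunctions $\keyword{discard}$ and $\keyword{unit}$ of $\cvbifn$.

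I expect the main obstacle to be organizational rather than mathematical: keeping the variance of $(-)^*$ and the sign asymmetry in the adjoint formula aligned with the $\op$-reindexing of the Frobenius data, and confirming that the regularity clause inherited from Theorem~\ref{thm:adjoint_functoriality} is vacuous here. For the latter I would note that every morphism assembled from the hypergraph generators is the indicator of a rational polyhedral (indeed linear) subspace, hence closed and proper with relative interiors meeting as required; so on the sub-prop they generate $(-)^*$ preserves composition unconditionally, and the hypergraph-functoriality statement itself carries no reservation.
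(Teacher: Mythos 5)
Your proof is correct, and it reaches the same conclusion by a more hands-on route than the paper. The paper's own proof is a one-line appeal to \Sec\ref{sec:linearalgebra}: all eight generators are indicator bifunctions $F_A$ of linear maps ($A\in\{\Delta,\,!\}$ and their transposes), and the already-recorded fact $F_A^*=G_{A^T}$ together with $\Delta^T=(x,y)\mapsto x+y$ and $!^T=0$ immediately gives $\keyword{copy}^*=\keyword{add}$, $\keyword{discard}^*=\keyword{zero}$, etc. You instead unwind that fact by computing $\inf_v\langle x^*-y^*-z^*,v\rangle$ and its relatives directly from the adjoint formula; the computations are all right (including the sign and argument-order bookkeeping, and the observation that the adjoints land in the \emph{concave} indicator versions of the white generators, as they must in $\cvbifn$). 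What your version buys is self-containedness plus two points the paper leaves implicit: the explicit check that $(-)^*$ is monoidal (the block-splitting of the infimum), and the remark that the regularity hypotheses of Theorem~\ref{thm:adjoint_functoriality} are automatically satisfied on the sub-prop generated by these polyhedral indicator bifunctions, so the dashed arrows can be read as genuine functors there. What the paper's route buys is economy and a single reusable lemma (adjoint of a linear indicator is the indicator of the transpose) rather than eight separate computations. One small caution in your monoidality step: splitting $\inf_{x_1,x_2}$ of a sum into a sum of infima can in principle meet the $(+\infty)+(-\infty)$ ambiguity; as you note this is harmless here, but it is exactly the kind of clause the paper's standing quantale convention is meant to absorb, so it is worth flagging rather than calling it entirely vacuous.
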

\begin{proof}
Follows from the results in \Sec\ref{sec:linearalgebra}, as the hypergraph structures are induced by linear maps.
\end{proof}
In terms of the generators \eqref{eq:gla_generators}, the mirror image is given by the $(-)^\dagger$ functor. Both hypergraph structures consist of $\dagger$-Frobenius algebras, meaning that $(-)^\dagger$ is a functor of hypergraph categories $\cxbifn^\op \to \cxbifn$.

The color-swap operation is given by the inverse adjoint $F^*_*$, which gives a hypergraph equivalence $(\cxbifn,\black) \to (\cxbifn,\white)$. This equivalence does however not commute with $\dagger$, i.e. is not an equivalence of dagger hypergraph categories.

\section{Example Categories of Bifunctions}\label{sec:examples}

We now elaborate example subcategories of bifunctions on which functoriality and duality work as desired (that is, all regularity conditions apply).  

\subsection{Linear Algebra}\label{sec:linearalgebra}

The identities and dualities of convex bifunctions generalize those of linear algebra. Let $A : \R^m \to \R^n$ be a linear map. The convex indicator bifunction of $A$ is defined as
\[ F_A(x,y) = \iv{y = Ax} \]
The following facts hold \cite[p~310]{rockafellar}:
\begin{enumerate}
\item For composable linear maps, $A,B$ we have $F_{AB} = F_A \circ F_B$
\item The adjoint $F_A^*$ is the concave indicator bifunction of the transpose $A^T$ 
\[ F_A^*(y^*,x^*) = -\iv{x^* = A^Ty^*} \]
\item if $A$ is invertible, then the inverse $(F_A)_*$ is the concave indicator bifunction associated to the inverse $A^{-1}$. In that case, Proposition~\ref{prop:inv_adj_commute} generalizes the identity $(A^{-1})^T = (A^T)^{-1}$.
\end{enumerate}

In more categorical terms, let $\catname{Vect}$ denote the prop of the vector spaces $\R^n$ and linear maps. This is a copy-delete category equipped with the linear maps $\Delta : \R^n \to \R^n \oplus \R^n$ and $! : \R^n \to \R^0$. For a linear map $A : \R^m \to \R^n$, define 
\begin{align*}
F_A : \R^m \cxto \R^n, F_A(x,y) = \iv{y=Ax} \\
G_A : \R^n \cvto \R^m, G_A(y,x) = -\iv{x=A^Ty}
\end{align*}
\begin{theorem}
We have a commutative diagram of functors between copy-delete categories
\begin{equation}\begin{tikzcd}[ampersand replacement=\&]
	\& {\catname{Vect}} \\
	\\
	{(\cvbifn^{\op},\white)} \&\& {(\cxbifn,\black)}
	\arrow["F", from=1-2, to=3-3]
	\arrow["G"', from=1-2, to=3-1]
	\arrow["{(-)^*}"', curve={height=-6pt}, dashed, from=3-1, to=3-3]
	\arrow[curve={height=-6pt}, dashed, from=3-3, to=3-1]
\end{tikzcd}\label{eq:duality_vect}
\end{equation}
\end{theorem}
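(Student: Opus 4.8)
The plan is to check three things: that $F$ is a functor of copy-delete categories, that $G$ is a functor of copy-delete categories, and that the triangle commutes, i.e. that the adjoint $(-)^*$ sends $G$ to $F$ and $F$ to $G$. The regularity side-conditions attached to the dashed arrows are automatic throughout, because every bifunction in the image of $F$ or $G$ is the (convex, resp. concave) indicator of a linear subspace of some $\R^{m+n}$ — namely the graph of $A$, resp. of $A^T$ — hence closed and proper; the facts needed for the subspace computations are collected in \cite[p.~310]{rockafellar}. So the whole proof is a sequence of short direct computations plus some bookkeeping.

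\emph{Functoriality of $F$.} That each $F_A$ is a closed convex bifunction is immediate. Identities are preserved since $F_{\id} = \id$ by definition. For composition, $(F_B \cxcirc F_A)(x,z) = \inf_y\{\iv{y=Ax} + \iv{z=By}\}$, and the only value of $y$ keeping the summand finite is $y = Ax$, so the infimum equals $\iv{z = BAx} = F_{BA}(x,z)$. Monoidality $F_{A \oplus B} = F_A \otimes F_B$ holds because the indicator of a product subspace is the quantale sum of the two component indicators. Finally, preservation of the comonoid structure is one line each: $F_\Delta(x,(y,z)) = \iv{(y,z) = (x,x)} = \iv{x=y=z} = \keyword{copy}(x,y,z)$ and $F_!(x,\z) = 0 = \keyword{discard}(x,\z)$, which are exactly the black comultiplication and counit.

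\emph{Functoriality of $G$.} The same four computations go through with $A$ replaced by $A^T$, infima by suprema, and convex indicators by concave ones; composition uses $\langle x^*, A^T y\rangle = \langle A x^*, y\rangle$ to realize $(BA)^T = A^T B^T$. The one point requiring care is the variance/colour bookkeeping: a comultiplication $\R^n \to \R^n\otimes\R^n$ in $(\cvbifn^{\op},\white)$ is by definition the white \emph{multiplication} $\keyword{add}$ of $\cvbifn$, and the counit is the white unit $\keyword{zero}$. And indeed $G_\Delta((y,z),x) = -\iv{x = \Delta^T(y,z)} = -\iv{x = y+z}$, which is the concave $\keyword{add}$ since $\Delta^T$ is addition, while $G_!(\z,x) = -\iv{x = 0}$ is the concave $\keyword{zero}$ since $!^T$ is the zero map. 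Hence $G$ preserves the copy-delete structure of $(\cvbifn^{\op},\white)$.

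\emph{Commutativity, and the main obstacle.} One can compute $G_A^*$ directly from the concave-adjoint formula: $G_A(u,v) = -\iv{v = A^T u}$ forces $v = A^T u$, leaving $\sup_u \langle n^* - A m^*, u\rangle = \iv{n^* = A m^*}$, so $G_A^* = F_A$. Alternatively, and more cheaply, cite $F_A^* = G_A$ from \cite[p.~310]{rockafellar}, note that $F_A$ is closed, and apply the double-adjoint proposition to get $G_A^* = F_A^{**} = F_A$; the symmetric identity $F_A^* = G_A$ then gives commutativity in the other direction, and closedness ensures the two $(-)^*$ arrows restrict to mutually inverse functors on the images of $F$ and $G$. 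I expect the only genuinely fiddly part to be precisely this last kind of bookkeeping — keeping straight the interplay of ``$\op$'', the dagger, and the black/white swap so that $G$ really is a copy-delete functor into $\cvbifn^{\op}$ with its white structure (and similarly so that ``the diagram commutes'' is stated with the right variances); once the conventions are pinned down, each individual verification collapses to a one-line computation on indicators of subspaces.
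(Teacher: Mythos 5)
Your proposal is correct and follows essentially the same route as the paper: functoriality and commutativity rest on the facts $F_{AB}=F_A\circ F_B$ and $F_A^*=G_A$ (cited from Rockafellar, p.~310), and preservation of the copy-delete structure comes down to observing that $\keyword{copy},\keyword{discard},\keyword{add},\keyword{zero}$ are exactly the indicator bifunctions of $\Delta$ and $!$ and their transposes (with $\Delta^T$ being summation). You simply carry out explicitly the one-line indicator computations that the paper delegates to the cited facts, and your variance bookkeeping for $G$ into $(\cvbifn^{\op},\white)$ is handled correctly.
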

\begin{proof}
Functoriality and commutativity follow from the above facts. For the copy-delete structures, notice that $\keyword{copy}, \keyword{delete}, \keyword{add}, \keyword{zero}$ are the indicator bifunctions of the linear maps $\Delta$ and $!$. The transpose of $\Delta$ is summation $(x,y) \mapsto x+y$.
\end{proof}

We call a diagram like \eqref{eq:duality_vect} a \emph{duality situation}. The dashed arrows indicate that, while $(-)^*$ is neither a functor nor idempotent on all bifunctions without further conditions, everything works out on the image of $F,G$ respectively. We could thus obtain a genuine commutative diagram of functors by characterizing these images exactly (which we refrain from doing here for the sake of simplicity).

\paragraph{Linear Relations}

Graphical Linear Algebra \cite{graphical_la} is the diagrammatic study of the prop $\catname{LinRel}$ of linear relations, which are relations $R \subseteq \R^m \times \R^n$ that are also vector subspaces. This category is a hypergraph category using the two structures shown in \eqref{eq:gla_generators}, and the operations mirror-image and color-swap are defined for linear relations via relational converse and a twisted orthogonal complement
\begin{align*}
R^\dagger &= \{ (y,x) : (x,y) \in R \} \\
R^c &= \{ (x^*,y^*) : \forall (x,y) \in R, \langle x^*,x \rangle - \langle y^*,y \rangle = 0 \}
\end{align*}
The operations $(-)^\dagger$ and $(-)^c$ commute and define a composite contravariant involution $(-)^* : \catname{LinRel}^\op \to \catname{LinRel}$. The following theorem shows that bifunctions are a conservative extension of graphical linear algebra.

\begin{theorem}\label{thm:linrel_functor}
If we embed a linear relation $R \subseteq \R^m \times \R^n$ via its indicator function as a bifunction $I_R : \R^m \cxto \R^n$, then we have a commutative diagram
\begin{equation}\begin{tikzcd}[ampersand replacement=\&]
	{\catname{LinRel}^\op} \&\& {\catname{LinRel}} \\
	\\
	{\cvbifn^\op} \&\& \cxbifn
	\arrow[curve={height=-6pt}, from=1-1, to=1-3]
	\arrow["{(-)^*}"', curve={height=-6pt}, from=1-3, to=1-1]
	\arrow[curve={height=-6pt}, dashed, from=3-1, to=3-3]
	\arrow["{(-)^*}"', curve={height=-6pt}, dashed, from=3-3, to=3-1]
	\arrow["I", from=1-3, to=3-3]
	\arrow["{-I}"', from=1-1, to=3-1]
\end{tikzcd}\label{eq:duality_linrel}
\end{equation}
In addition, the functor $I$ preserves both hypergraph structures.
\end{theorem}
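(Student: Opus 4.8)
The plan is to verify the commutativity of the square \eqref{eq:duality_linrel} and the preservation of the hypergraph structures by checking each piece on the level of indicator functions, reducing everything to the corresponding facts about linear relations. First I would establish that $I$ is a functor $\catname{LinRel} \to \cxbifn$: the identity relation $\Delta_n = \{(x,x)\}$ maps to $\iv{x=y} = \id_n$, and for composable relations $R \subseteq \R^m\times\R^n$, $S\subseteq\R^n\times\R^k$ one computes $(I_S \cxcirc I_R)(x,z) = \inf_y\{\iv{(x,y)\in R} + \iv{(y,z)\in S}\}$, which is $0$ exactly when some $y$ witnesses $(x,y)\in R$ and $(y,z)\in S$, i.e.\ when $(x,z)\in S\circ R$, and $+\infty$ otherwise — so $I_{S\circ R} = I_S \cxcirc I_R$. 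Note that because a linear relation is a subspace (hence nonempty and containing $0$), the relevant infima are attained and no $(+\infty)+(-\infty)$ issue arises; this is also the content of why the regularity conditions are automatically satisfied here, so the dashed arrows in the bottom row genuinely restrict to honest functors on the image of $I$.

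Next I would check the duality square commutes, i.e.\ that $I_{R^*} = (I_R)^*$ up to the sign convention recorded by the vertical arrows $I$ and $-I$. Unfolding the definition of the adjoint, $(I_R)^*(y^*,x^*) = \inf_{x,y}\{\iv{(x,y)\in R} + \langle x^*,x\rangle - \langle y^*,y\rangle\}$, which equals $0$ if $\langle x^*,x\rangle - \langle y^*,y\rangle = 0$ for all $(x,y)\in R$ — that is, if $(x^*,y^*)\in R^c$ — and is $-\infty$ otherwise, since a subspace on which a nonzero linear functional is not identically zero is unbounded below. Hence $(I_R)^* = -\iv{(x^*,y^*)\in R^c}$, which is exactly $(-I)$ applied to the linear relation $R^* = (R^c)$ (after accounting for the argument-swap built into the adjoint versus the definition of $R^*$ as $(-)^\dagger$ composed with $(-)^c$). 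Commutativity of the square then follows from functoriality of $I$, $-I$ and the already-established functoriality of $(-)^*$ on linear relations.

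Finally, for preservation of the two hypergraph structures I would observe that all eight generators of the black and white Frobenius algebras on $\cxbifn$ are themselves indicator bifunctions of linear relations: $\keyword{copy}$ and $\keyword{comp}$ are the indicators of the diagonal relation $\{(x,x,x)\}$, $\keyword{unit}$ and $\keyword{discard}$ of the full/empty-codomain relations $\R^n\times\R^0$, while $\keyword{add}$ and $\keyword{coadd}$ are the indicators of $\{(x,y,z): x+y=z\}$ and $\keyword{zero}$, $\keyword{cozero}$ of $\{0\}$. Each of these is precisely $I$ applied to the corresponding generator of the hypergraph structure on $\catname{LinRel}$ from \eqref{eq:gla_generators}, so preservation is immediate once functoriality of $I$ is in hand. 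The main obstacle, such as it is, is bookkeeping: one must be careful about the placement of negations and argument-reversals so that the sign convention on the $\cvbifn$ side (concave indicators, $\sup$ in place of $\inf$) lines up with the relational converse/complement, and one must confirm that the infima defining composition and adjunction are genuinely attained for subspaces so that the dashed arrows restrict to strict functors — but both are routine given the linearity and the fact that subspaces contain the origin.
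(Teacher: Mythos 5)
Your proof is correct and is exactly the routine verification the paper leaves implicit: the paper states Theorem~\ref{thm:linrel_functor} without proof, and its one-line proof of the analogous statement for $\catname{Vect}$ uses the same strategy you do (direct computation of composites and adjoints of indicator bifunctions, plus the observation that all hypergraph generators are themselves indicator bifunctions of linear relations). Your key calculation --- that $\inf_{(x,y)\in R}\{\langle x^*,x\rangle-\langle y^*,y\rangle\}$ is $0$ on $R^c$ and $-\infty$ otherwise because a linear functional not vanishing on a subspace is unbounded below --- is precisely the point that makes the dashed duality arrows honest functors on the image of $I$, and your sign/argument-order bookkeeping matches the paper's conventions.
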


\paragraph{Affine Relations} Graphical linear algebra has been extended to affine relations \cite{pbsz}; those are affine subspaces $R \subseteq \R^m \times \R^n$. This still forms a hypergraph category with both structures $\black,\white$, however the color-swap symmetry of linear relations is broken. That is because the affine generator $\underline 1 : 0 \to 1$ representing the affine relation $\{(\z,1)\}$ does not have an obvious color-swapped dual; affine subspaces are not recovered by their orthogonal complements.

The embedding into bifunctions suggests an avenue to recover such a symmetry: Taking the embedding \eqref{eq:duality_linrel} as a starting point, the indicator bifunction of $\underline 1$ is $f : I \cxto \R, f(\z,x) = \iv{x=1}$. Its adjoint is $f^*(x^*,\z) = -x^*$, which is a perfectly well-defined bifunction but not the indicator bifunction of any affine relation. This suggests that an extension of affine relations with color-swap symmetry can be obtained using a category of partial affine function (e.g. \cite[p.~107]{rockafellar}) but details are to left for future work. We will discuss the case of partial quadratic functions in \Sec\ref{sec:pqf}.

\subsection{Convex Relations}

Generalizing the previous example even further, a \emph{convex relation} $R \subseteq \R^m \times \R^n$ is a relation which is also a convex subset of $\R^{m + n}$. Convex relations are closed under the usual relation composition and thus form a prop $\catname{CxRel}$ \cite{bolt2019,coecke2018,marsden2017}.

Every linear relation is in particular convex, and like linear relations, convex relations embed into convex bifunctions via the indicator function.

We sketch a certain converse to this embedding: The space $(\R,+,0)$ is a monoid object in $\catname{CxRel}$. We consider the `writer' monad $T : \catname{CxRel} \to \catname{CxRel}$ associated to that monoid, i.e. $T(\R^m) = \R^{m+1}$. If $S \subseteq \R^m \times \R^{n+1}$ and $R \subseteq \R^n \times \R^{k+1}$ are Kleisli arrows, then Kleisli composition takes the following form
\[ R \bullet S = \{ (x,z,t_1+t_2) : (x,y,t_1) \in S, (y,z,t_2) \in R \} \]

Given a convex bifunction $F : \R^m \cxto \R^n$, the epigraph of its graph function $\epi(\graph F) \subseteq \R^m \times \R^{n+1}$ is thus a Kleisli arrow for $T$. Under sufficient regularity assumptions, this is functorial, and we have an embedding $\keyword{epi} : \cxbifn \to \catname{CxRel}_T$.

\subsection{Ordinary Convex Programs}
We briefly discuss the historical origins of bifunctions in convex optimization \cite[\S~29-30]{rockafellar}: For simplicity, we say that a \emph{ordinary convex program} $P$ is a minimization problem of the form
\[ \inf \{ f(x) : x \in \R^n, g_1(x) \leq 0, \ldots, g_k(x) \leq 0 \} \]
where the objective function $f$ and the constraints $g_1, \ldots, g_k : \R^n \to \exR$ are finite convex functions. The \emph{bifunction associated to $P$} is defined as 
\[ F_P : \R^k \cxto \R^n, F_P(v,x) = f(x) + \sum_{i=1}^k \iv{f_i(x) \leq v_i} \]
The inputs of $v \in \R^k$ can be thought of as perturbations of the constraints. The so-called \emph{perturbation function} of $P$ is the parameterized minimization problem $(\inf F_P)(v) = \inff x {F_P(v,x)}$. The convex function $F_P(0,-)$ represents the unperturbed problem and $(\inf F_P)(0)$ is the desired solution. Note that in categorical language, the perturbation function is straightforwardly obtained as the bifunction composite $(\keyword{discard} \circ F_P) : \R^k \cxto I$, or graphically

\[ \begin{tikzpicture}[scale=0.3]
	\begin{pgfonlayer}{nodelayer}
		\node [style=none] (0) at (-3, 0) {};
		\node [style=morphism] (1) at (0, 0) {$F_P$};
		\node [style=bn] (2) at (3, 0) {};
		\node [style=none] (3) at (-5, 0) {$\R^k$};
	\end{pgfonlayer}
	\begin{pgfonlayer}{edgelayer}
		\draw (0.center) to (1);
		\draw (1) to (2);
	\end{pgfonlayer}
\end{tikzpicture}
 \]

\noindent The associated bifunction $F_P$ contains all information about the problem $P$, and allows one to find the dual problem $P^*$ by taking its adjoint. This way one can think of \emph{any} bifunction $\R^k \cxto \R^n$ as a \emph{generalized convex program} (\cite[p.~294]{rockafellar}).

\begin{example}[{\cite[p.~312]{rockafellar}}]
Consider a linear minimization problem $P$ of the form
\[ \inf \{ \langle c,x \rangle : b - Ax \leq 0 \} \]
The associated bifunction and its adjoint are
\begin{align*}
F(v,x) &= \langle c,x \rangle + \iv{x \geq 0, b-Ax \leq v } \\
F^*(x^*,v^*) &= \langle b, v^* \rangle - \iv{v^* \geq 0, c-A^Tv^* \geq x^*}
\end{align*}
which is the concave bifunction associated to the dual maximization problem
\[ \sup \{ \langle b,y \rangle : y \geq 0, c-A^Ty \geq 0 \} \]
\end{example}

\section{Gaussian Probability and Convexity}\label{sec:gaussians}

We now study the probabilistic applications of our categorical framework: Recently, a sizeable body of work in categorical probability theory has been developed in terms of copy-delete and Markov categories. A Markov category \cite{fritz2020synthetic} is a copy-delete category $(\mathbb C, \otimes, I)$ where every morphism $f : X \to Y$ is \emph{discardable} in the sense that $\keyword{discard}_Y \circ f = \keyword{discard}_X$. Classic examples of Markov categories are the category $\catname{FinStoch}$ of finite sets and stochastic matrices, and the category $\catname{Stoch}$ of measurable spaces and Markov kernels. Discardability expresses that probability measures are normalized (integrate to 1). Markov categories provide a natural semantic domain for probabilistic programs \cite{stein2021structural}.

In this section, we will focus on \emph{Gaussian probability}, by which we mean the study of multivariate normal (Gaussian) distributions and affine-linear maps. This is a small but expressive fragment of probability, which suffices for a range of interesting application from linear regression and Gaussian processes to Kalman filters. The univariate normal distribution $\N(\mu,\sigma^2)$ is defined on $\R$ via the density function
\[ f(x) = \frac{1}{\sqrt{2\pi\sigma^2}}\exp\left(-\frac{(x-\mu)^2}{2\sigma^2}\right)\]

Multivariate Gaussian distributions are easiest described as the laws of random vectors $A\cdot X + \mu$ where $A \in \R^{n \times k}$ and $X_1, \ldots, X_k \sim \N(0,1)$ are independent variables. The law is fully characterized by the mean $\mu$ and the covariance matrix $\Sigma = AA^T$. Conversely, for every vector $\mu \in \R^n$ and positive semidefinite matrix $\Sigma \in \R^{n \times n}$, there exists a unique Gaussian law $\N(\mu,\Sigma)$. If $X \sim \mathcal N(\mu,\Sigma)$ and $Y \sim \mathcal N(\mu',\Sigma')$ are independent then $X + Y \sim \N(\mu + \mu',\Sigma + \Sigma')$ and $AX \sim \N(A\mu,A\Sigma A^T)$. Gaussians are self-conjugate: If $(X,Y)$ are jointly Gaussian, then so is the conditional distribution $X|Y=y$ for any constant $y \in \R^k$.\\

If the covariance matrix $\Sigma$ is positive definite, then the Gaussian has a density with respect to the Lebesgue measure on $\R^n$ given by
\begin{equation}
     f(x) = \frac{1}{\sqrt{(2\pi)^n\det(\Sigma)}}\exp\left(-\frac 1 2 (x-\mu)^T\Omega(x-\mu)\right) \label{eq:gauss_density}
\end{equation}
where $\Omega = \Sigma^{-1}$ is known as the \emph{precision matrix}. This suggests two equivalent representations of Gaussians with different advantages (e.g. \cite{james:variance_manifold,stein2023gaussex}): 
\begin{itemize}
\item In \emph{covariance representation} $\Sigma$, pushforwards (addition, marginalization) are easy to compute. Conditioning requires solving an optimization problem
\item In \emph{precision representation} $\Omega$, conditioning is straightfoward. Pushforwards require solving an optimization problem.
\end{itemize}

If $\Sigma$ is singular, the Gaussian distribution is only supported on the affine subspace $\mu + S$ where $S = \im(\Sigma)$. In that case, the distribution has a density only with respect to the Lebesgue measure on $S$. This variability of base measure makes it complicated to work with densities, and by extension the precision representation.

The situation becomes clearer if we represent Gaussians by the quadratic functions induced by their covariance and precision matrices. These functions are convex (concave), and turn out to be adjoints of each other. This explains the duality of the two representations, and paves the way for generalizations of Gaussian probability like improper priors \cite{stein2023gaussex} which correspond to partial quadratic functions (\Sec\ref{sec:pqf}).

\subsection{Embedding Gaussians in Bifunctions}

We now give a categorical account of Gaussian probability (in covariance representation). A \emph{Gaussian morphism} $\R^m \to \R^n$ is a stochastic map of the form $x \mapsto Ax + \mathcal N(\mu,\Sigma)$, that is a linear map with Gaussian noise.

\begin{definition}[{\cite[\S 6]{fritz2020synthetic}}]\label{def:gauss}
The Markov prop $\gauss$ is given as follows
\begin{enumerate}
\item objects are the spaces $\R^n$, and $\R^m \otimes \R^n = \R^{m+n}$
\item morphisms $\R^m \to \R^n$ are tuples $(A,\mu,\Sigma)$ with $A \in \R^{n \times m}, \mu \in \R^n$ and $\Sigma \in \R^{n \times n}$ positive semidefinite
\item composition and tensor are given by the formulas
\begin{align*}
(A,\mu,\Sigma) \circ (B,\mu',\Sigma') &= (AB,\mu + A\mu',\Sigma + A\Sigma' A^T) \\
(A,\mu,\Sigma) \otimes (B,\mu',\Sigma') &= \left(A \oplus B, \mu \oplus \mu', \Sigma \oplus \Sigma')\right)
\end{align*}  
where $\oplus$ is block-diagonal composition.
\item the copy-delete structure is given by the linear maps $\Delta,!$
\end{enumerate}
\end{definition}

We have a Markov functor $\gauss \to \catname{Stoch}$ which sends $\R^n$ to the measurable space $(\R^n,\keyword{Borel}(\R^n))$ and assigns $(A,\mu,\Sigma)$ to the probability kernel given by $x \mapsto \N(Ax + \mu,\Sigma)$. Functoriality expresses that the formulas of Definition~\ref{def:gauss} agree with composition of Markov kernels given by integration of measures. Our main theorem shows that, surprisingly, the representation of Gaussians by quadratic functions is also functorial, i.e. we have an embedding $\gauss \to \cxbifn$.

\begin{theorem}\label{thm:gauss_functor}
We have functors of copy-delete categories in a duality situation
\[\begin{tikzcd}[ampersand replacement=\&]
	\& \gauss \\
	\\
	{(\cvbifn^\op,\black)} \&\& (\cxbifn,\white)
	\arrow["\logpdf"', from=1-2, to=3-1]
	\arrow["\cgf", from=1-2, to=3-3]
	\arrow[curve={height=-6pt}, dashed, from=3-1, to=3-3]
	\arrow["{(-)^*}"', curve={height=-6pt}, dashed, from=3-3, to=3-1]
\end{tikzcd}\]
The functors are defined as follows: Let $f = (A,\mu,\Sigma) \in \gauss(\R^m, \R^n)$, and define bifunctions
\begin{align*}
\logpdf_f : \R^n \cvto \R^m, \quad &\logpdf_f(y,x) = -\frac 1 2 \langle z,\Sigma^{-}z \rangle - \iv{z \in S} \\
\cgf_f : \R^m \cxto \R^n, \quad &\cgf_f(x,y) = \frac 1 2 \langle y,\Sigma y \rangle + \langle \mu, y \rangle + \iv{x = A^Ty}
\end{align*}
where $z = y-(Ax + \mu), S=\im(\Sigma)$ and $\Sigma^-$ denotes any \emph{generalized inverse} of $\Sigma$ (see \Sec\ref{appendix:partialquadratic}).
\end{theorem}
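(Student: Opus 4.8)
The plan is to pin down the $\cgf$ side by direct computation, and then obtain the $\logpdf$ side, together with the commuting square, essentially for free by adjunction.

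First I would check well-definedness. For $\cgf_f$ the form $y\mapsto\frac12\langle y,\Sigma y\rangle$ is convex since $\Sigma\succeq 0$, $\langle\mu,y\rangle$ is affine, and $\iv{x=A^Ty}$ is the convex indicator of an affine subspace; hence the graph function is jointly convex with closed epigraph, so $\cgf_f$ is a closed convex bifunction. For $\logpdf_f$ one additionally needs the standard fact that for symmetric $\Sigma\succeq 0$ and $z\in S=\im\Sigma$ the value $\langle z,\Sigma^- z\rangle$ does not depend on the chosen generalized inverse --- writing $z=\Sigma p$ it equals $\langle p,\Sigma p\rangle$. Thus $z\mapsto-\frac12\langle z,\Sigma^- z\rangle-\iv{z\in S}$ is a well-defined closed concave function on $\R^n$, and precomposing with the affine map $(y,x)\mapsto y-(Ax+\mu)$ keeps it so.

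Next I would verify directly that $\cgf$ is a functor of copy-delete categories $\gauss\to(\cxbifn,\white)$. The key point is that the constraint $x=A^Ty$ in $\cgf_f$ is \emph{functional} in the variable one infimizes over, so composition never calls for a genuine optimization: for $f=(A,\mu,\Sigma)$ and $g=(B,\nu,T)$ the indicator $\iv{y=B^Tz}$ occurring in $\cgf_g\cxcirc\cgf_f$ pins $y=B^Tz$, and substituting collapses the infimum over $y$ to the single value $\tfrac12\langle z,(T+B\Sigma B^T)z\rangle+\langle\nu+B\mu,z\rangle+\iv{x=(BA)^Tz}$, which is exactly $\cgf_{g\circ f}$. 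Preservation of identities ($(\id,0,0)\mapsto\iv{x=y}$) and of the tensor is immediate. For the comonoid structure, the deterministic maps $\Delta$ and $!$ are sent by $\cgf$ to the indicator bifunctions of the linear maps $\Delta,!$ read with the co-additive (white) conventions --- $\cgf(\Delta)(x,(y_1,y_2))=\iv{x=y_1+y_2}$ is the white comultiplication, $\cgf(!)(x,\z)=\iv{x=0}$ the white counit --- so this is an instance of the linear-algebra results of \Sec\ref{sec:linearalgebra}. As a reassurance, a Gaussian state $(0,\mu,\Sigma)$ is sent to the genuine cumulant-generating function $y\mapsto\langle\mu,y\rangle+\tfrac12\langle y,\Sigma y\rangle$ of $\N(\mu,\Sigma)$, which explains the name of the functor.

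The crux is then to identify $\logpdf_f$ with the adjoint $\cgf_f^*$. Computing $\cgf_f^*(y^*,x^*)=\inf_{x,y}\{\cgf_f(x,y)+\langle x^*,x\rangle-\langle y^*,y\rangle\}$, the constraint $x=A^Ty$ turns $\langle x^*,x\rangle$ into $\langle Ax^*,y\rangle$ and leaves the unconstrained quadratic minimization $\inf_y\{\tfrac12\langle y,\Sigma y\rangle+\langle\mu+Ax^*-y^*,\,y\rangle\}$; this is $-\infty$ unless $c:=\mu+Ax^*-y^*\in S$ and otherwise equals $-\tfrac12\langle c,\Sigma^- c\rangle$, which --- using $\langle c,\Sigma^- c\rangle=\langle -c,\Sigma^-(-c)\rangle$ and $-c=y^*-(Ax^*+\mu)$ --- is precisely $\logpdf_f(y^*,x^*)$. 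Hence $\logpdf_f=\cgf_f^*$, and conversely $\cgf_f=\logpdf_f^*$ once closedness is in hand; this is the content of the duality square. Since $\cgf$ preserves the copy-delete structure and lands in $(\cxbifn,\white)$, and the adjoint is a functor of hypergraph categories $(\cxbifn,\white)\to(\cvbifn^\op,\black)$ (interchanging the two Frobenius structures), the composite $\logpdf=(-)^*\circ\cgf$ is a copy-delete functor $\gauss\to(\cvbifn^\op,\black)$. I expect the only delicate point --- rather than any calculation --- to be making ``functor'' literal: Theorem~\ref{thm:adjoint_functoriality} grants functoriality of $(-)^*$ only under regularity, so one must check those hypotheses for the bifunctions at hand. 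Here they are unproblematic, since every $\cgf_f$ is proper, closed, and has an affine (hence polyhedral) domain, so the domain and relative-interior intersection conditions of \cite[\S~38]{rockafellar} are met automatically along any composite in the image of $\cgf$.
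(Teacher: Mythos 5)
Your proposal is correct and follows essentially the same route as the paper's appendix: direct verification that $\cgf$ is functorial by collapsing the infimum along the functional constraint $y=B^Tz$, then computing $\cgf_f^*$ via the conjugate of a positive semidefinite quadratic form (with the generalized-inverse independence lemma) to identify it with $\logpdf_f$, and recovering the converse from closedness. Your additional remarks on preservation of the comonoid generators and on why the regularity hypotheses of Theorem~\ref{thm:adjoint_functoriality} hold on the image of $\cgf$ make explicit points the paper leaves implicit.
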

\begin{proof}
We elaborate the proof systematically in the appendix.
\end{proof}
The value $\logpdf_f(y,x)$ is indeed the conditional log-probability \eqref{eq:gauss_density} minus a scalar. The name $\keyword{cgf}$ is short for cumulant-generating function, which we elaborate in \Sec\ref{sec:cgf}. For now, we can see $\keyword{cgf}$ as a generalized covariance representation.

\subsection{Outlook: Gaussian Relations}\label{sec:pqf}

Measure-theoretically, there is no uniform probability distribution over the real line. Such a distribution, if it existed, would be useful to model complete absence of information about a point $X$ -- in Bayesian inference, this is called an uninformative prior. Intuitively, such a distribution should have density $1$, but this would not integrate to 1. On the other hand, a formal logdensity of $0$ makes sense -- this is simply the indicator function of the full subset $\R \subseteq \R$.

An \emph{extended Gaussian distribution}, as described in \cite{stein2023gaussex}, is a formal sum $\mathcal N(\mu,\Sigma) + D$ of a Gaussian distribution and a vector subspace $D \subseteq \R^n$, called a fibre, thereby blending relational and probabilistic nondeterminism. Such entities were considered by Willems in the control theory literature, under the name of linear open stochastic systems \cite{willems:constrained,willems:oss}; he identifies them with Gaussian distributions on the quotient space $\R^n/D$. A categorical account based on decorated cospans is developed in \cite{stein2023gaussex}. \\

It is straightforward to embed extended Gaussian distributions into convex bifunctions, by taking the sum of the interpretations from Theorems~\ref{thm:linrel_functor}~and~\ref{thm:gauss_functor}. The distribution $\psi = \mathcal N(\mu,\Sigma) + D$ has a convex interpretation given by
\[ \cgf_\psi(x) = \frac 1 2 \langle x,\Sigma x \rangle + \langle \mu,x \rangle + \iv{x \in D^\bot} \]
Functions of this form are \emph{partial convex quadratic functions}, which are known to form a well-behaved class of convex functions (see \Sec\ref{app:pcqf}). The theory of such functions can be understood as an extension of Gaussian probability with relational nondeterminism and conditioning, which we term \emph{Gaussian relations}. In Gaussian relations, we achieve full symmetry between covariance and density representation (that is, there exists a color-swap symmetry). 

Partiality is necessary to be able to interpret all generators of \eqref{eq:gla_generators}; on the upside, the presence of partiality makes conditioning a first-class operation: For example, if $f : \R^2 \cvto I$ is the joint logdensity of Gaussian variables $(X,Y)$, then conditioning on $Y=0$ is the same as computing the bifunction composite with the $\keyword{zero}$ map, which is a simple restriction of logdensity $f_{X|Y=0}(x) = f(x,0)$. On the other hand, conditioning in the covariance representation $f^*$ requires solving the infimization problem $\inff {y^*} {f^*(x^*,y^*)}$. Graphically, we have
\[ \begin{tikzpicture}[scale=0.3]
	\begin{pgfonlayer}{nodelayer}
		\node [style=morphism] (0) at (-5.75, 0) {$f$};
		\node [style=none] (1) at (-10, 0.5) {};
		\node [style=wn] (2) at (-8, -0.5) {};
		\node [style=none] (3) at (-6.25, 0.5) {};
		\node [style=none] (4) at (-6.25, -0.5) {};
		\node [style=morphism] (5) at (2.75, 0) {$f^*$};
		\node [style=none] (6) at (3.25, 0.5) {};
		\node [style=none] (7) at (3.25, -0.5) {};
		\node [style=none] (8) at (8, 0.5) {};
		\node [style=bn] (9) at (5, -0.5) {};
		\node [style=none] (10) at (-1.5, 0) {vs.};
	\end{pgfonlayer}
	\begin{pgfonlayer}{edgelayer}
		\draw (2) to (4.center);
		\draw (3.center) to (1.center);
		\draw (7.center) to (9);
		\draw (8.center) to (6.center);
	\end{pgfonlayer}
\end{tikzpicture}
 \]

\section{A Wider Perspective}\label{sec:perspective}

The example of Gaussian probability was particular situation in which we could map probabilistic concepts to concepts of convex analysis in a functorial way. In this section, we will take an even wider perspective and view convex bifunctions as a categorical model of probability on its own. We will then point out known connections between probability theory and convex analysis, such as the Laplace approximation and cumulant generating functions.

\subsection{The Laplace Approximation}

For every copy-delete category $\mathbb C$, the subcategory of discardable morphisms is a Markov category, and can therefore be seen as a generalized model of probability theory. We investigate this notion for categories of bifunctions.

\begin{proposition}\label{prop:normalization}
Let $F : \R^m \cxto \R^n, G : \R^n \cvto \R^m$ be bifunctions. Then
\begin{enumerate}
\item $F$ is discardable in $(\cxbifn,\black)$ if $\forall x, \inf_y F(x,y) = 0$ 
\item $G$ is discardable in $(\cvbifn^\op,\white)$ if $\forall x, G(0,x) = \iv{x=0}$
\end{enumerate}
and the adjoint $(-)^*$ defines a bijection between the two.
\end{proposition}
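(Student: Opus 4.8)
\emph{Plan.} The statement breaks into the two concrete characterisations of discardability, (1) and (2), and the claim that $(-)^{*}$ matches them up. I would get (1) and (2) by unfolding the equation $\keyword{discard}\circ F=\keyword{discard}$ with the explicit composition formula and the explicit (co)unit maps, and then deduce the bijection from the fact — already recorded in the theorem that the adjoint interchanges the additive and co-additive hypergraph structures — that $(-)^{*}$ is a functor of copy-delete categories $(\cxbifn,\black)\to(\cvbifn^{\op},\white)$, together with its involutivity from Theorem~\ref{thm:adjoint_functoriality}.

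\emph{Parts (1) and (2).} In $(\cxbifn,\black)$ the discard map is $\keyword{discard}_n(y,\z)=0$, so for $F:\R^m\cxto\R^n$ the composition formula gives
\[ (\keyword{discard}_n\cxcirc F)(x,\z)=\inff y {F(x,y)+\keyword{discard}_n(y,\z)}=\inff y {F(x,y)}, \]
while $\keyword{discard}_m(x,\z)=0$; equating the two yields exactly $\inf_yF(x,y)=0$ for all $x$. Part (2) is the same computation transported through the opposite category: in $(\cvbifn^{\op},\white)$ the comonoid counit at $\R^n$ is the $\white$-\emph{unit} $\keyword{zero}_n:I\cvto\R^n$, $\keyword{zero}_n(\z,x)=-\iv{x=0}$, read as a reversed arrow, so discardability of $G:\R^n\cvto\R^m$ becomes, back in $\cvbifn$, the equation $G\cvcirc\keyword{zero}_n=\keyword{zero}_m$; since
\[ (G\cvcirc\keyword{zero}_n)(\z,x)=\supp y {-\iv{y=0}+G(y,x)}=G(0,x) \]
and $\keyword{zero}_m(\z,x)=-\iv{x=0}$, this is precisely the condition on $G(0,-)$. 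The only delicate point here is bookkeeping the Frobenius structure under $(-)^{\op}$, i.e. checking that the relevant ``discard'' really is the image of $\keyword{zero}$ and not of $\keyword{cozero}$.

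\emph{The bijection.} The forward direction is a one-line calculation: if $\inf_yF(x,y)=0$ for all $x$, then setting the first argument of $F^{*}$ to $0$ in the adjoint formula gives
\[ F^{*}(0,x^{*})=\inff {x,y} {F(x,y)+\langle x^{*},x\rangle}=\inff x {\langle x^{*},x\rangle+\inff y {F(x,y)}}=\inff x {\langle x^{*},x\rangle}=-\iv{x^{*}=0}, \]
so $F^{*}$ satisfies (2). Running the symmetric computation on $G^{*}$ (a convex bifunction $\R^m\cxto\R^n$) shows that $G$ satisfying (2) forces $\inf_y G^{*}(x,y)=0$, and $G^{**}=G$ closes the loop; hence $(-)^{*}$ restricts to a bijection between the two classes. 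More conceptually, functors of copy-delete categories preserve discardable morphisms, so the functor $(\cxbifn,\black)\to(\cvbifn^{\op},\white)$ carries class (1) into class (2), and since both it and its (regularity-dependent) inverse are such functors, the correspondence is bijective.

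\emph{Main obstacle.} I expect the only real friction to be the converse half of the bijection: evaluating $\inf_{y^{*}}G^{*}(x^{*},y^{*})$ — equivalently, using $G^{**}=G$ — requires the usual closedness/properness regularity so that infimization may be exchanged with the outer supremum, which is exactly the reservation already attached to Theorem~\ref{thm:adjoint_functoriality}. The forward inclusion ``(1) $\Rightarrow$ (2) for $F^{*}$'' and the characterisations (1) and (2) themselves are unconditional; everything else is routine once the (co)units of the two hypergraph structures, and their behaviour under $(-)^{\op}$, are correctly identified.
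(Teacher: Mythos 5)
Your proof is correct and matches the paper's approach: the paper's own proof is simply ``Direct calculation,'' and you have carried out exactly that calculation, including the right identification of the counits of the two Frobenius structures under $(-)^\op$ and the correct observation that the converse half of the bijection (the inf/sup exchange, or equivalently $G^{**}=G$) is where the regularity assumptions enter. One small point in your favour: your computation yields $G(0,x)=-\iv{x=0}$ with the concave indicator, which is the sign-consistent form of condition (2) as stated.
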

\begin{proof}
Direct calculation.
\end{proof}
The embedding of Theorem~\ref{thm:gauss_functor} takes values in discardable bifunctions and hence preserve Markov structure. Functoriality means that the composition of Gaussians (integration) and the composition of bifunctions (optimization) coincide. For general probability distributions, this will no longer be the case. We can however understand bifunction composition as an approximation of ordinary probability theory under the so-called \emph{Laplace approximation}. In its simplest instance, Laplace's method (or method of steepest ascent) is a method to approximate certain integrals by finding the maxima of its integrand (e.g. \cite{touchette2005legendre})
\[ \int e^{n(\langle c,x \rangle - f(x))} \mathrm d x \approx \exp\left(n\supp x {\langle c,x\rangle - f(x)}\right) \text{ for } n \to \infty \]

A wide class of commonly used probability distributions is \emph{log-concave}, including Gaussian, Laplace, Dirichlet, exponential and uniform distributions. Laplace's approximation (e.g. \cite[\S 27]{mackay2003information}) is a way of approximating such distributions around their mode $x_0$ by a normal distribution, as the Taylor expansion of their logdensity resembles a Gaussian one
\[ h(x) \approx h(x_0) + \frac 1 2 h''(x_0)(x-x_0)^2 \]
We can attempt to reduce questions about such distributions to mode-finding (maximization). The Laplace approximation is fundamental in many applications such as neuroscience \cite{friston2009predictive,friston2007variational} and has been generalized to a large body of literature on so-called saddle-point methods \cite{butler2007saddlepoint,mccullagh2018tensor}. The existence of the functor from Gaussians to bifunctions expresses that, as desired, the Laplace approximation is exact on Gaussians. We give an example of the approximation \emph{not} being exact (ironically) on Laplacian distributions.

\begin{example}
The standard Laplacian distribution has the density function $f(x) = \frac 1 2 \exp(|x|)$ on the real line. The logpdf $h(x) = |x|$ is a convex function whose convex conjugate is given by $h^*(x^*) = \iv{|x^*| \leq 1}$ (see Example~\ref{prop:ex_abs}). The latter function is idempotent under addition, and conversely $h \conv h = h$, so $h$ is idempotent under infimal convolution. In contrast, the density $f(x)$ is not idempotent under integral convolution: The sum of independent standard Laplacians is \emph{not} itself Laplacian.
\end{example}

\subsection{Convex Analysis in Probability Theory}\label{sec:cgf}
For a random variable $X$ on $\R^n$, the moment generating function $M_X$ is defined by the following expectation (provided that it exists) $M_X(x^*) = \mathbb E[e^{\langle x^*,X\rangle}]$. The \emph{cumulant-generating function} is defined as its logarithm $c_X(x^*) = \log M_X(x^*)$. The function $c_X$ is always convex. The cumulant-generating function of a multivariate Gaussian $X \sim \mathcal N(\mu,\Sigma)$ is precisely
\begin{equation} c_X(x^*) = \frac 1 2 \langle x^*,\Sigma x^* \rangle + \langle x^*,\mu \rangle \label{eq:gauss_cgf} \end{equation}
which explains our choice of the convex bifunction $\keyword{cgf}$ associated to a Gaussian morphism in Theorem~\ref{thm:gauss_functor}. The notion of cumulant-generating function has a central place in the study of exponential families.

It is a particular fact about Gaussians that the cumulant-generating function is the convex conjugate of the logdensity. In the general case, the convex conjugate $c_X^*(x)$ does have a probabilistic interpretations as a so called-rate function in large deviations theory (Cram\'er's theorem, \cite{cramer1938nouveau}). It has also been used to formulate a variational principle \cite{zajkowski2017variational}.

\subsection{Idempotent Mathematics}

We zoom out to an even wider perspective: This subsection briefly outlines some further background of the connections between convex and probabilistic world: The logarithm of base $t<1$ defines an isomorphism of semirings $([0,\infty), \times, +) \to (\R \cup \{+\infty\}, +, \oplus_t)$ where $\oplus_t$ is $x \oplus_t y = \log_t(t^x + t^y)$. In the `tropical limit' $t \searrow 0$, we have $x \oplus_t y \approx \min(x,y)$, so we can consider working in the semiring $(\exR, +, \min)$ as a limit or deformation of the usual operations on the reals. The semiring $\exR$ is idempotent, meaning $x \oplus x= \min(x,x) = x$, hence this field of study is also known as \emph{idempotent mathematics} \cite{puhalskii2001largedeviations}, and the limiting procedure has been called \emph{Maslov dequantization} \cite{litvinov2007}. Our definition of convex bifunctions in terms of the idempotent semiring $\exR$ thus carries a strong flavor of idempotent mathematics. 

Idempotent analogues of measure theory are discussed in \cite{puhalskii2001largedeviations,litvinov2007}, and many theorems in classical probability theory are mirrored by theorems of idempotent probability theory. For example, the idempotent analogue of integration is infimization; under this view, the tropical analogue of the Laplace transform (cf. moment-generating function) is the Legendre transform \cite[\S 7]{litvinov2007}
\[ \int e^{\langle x^*, x \rangle} f(x) \mathrm dx \quad\leftrightarrow\quad \inff x {\langle x^*, x \rangle + f(x)} \]
which explains the appearance of the cumulant-generating function in our work. Theorem~\ref{thm:gauss_functor} means that for Gaussians, it makes no difference whether we work in the real-analytic or idempotent world. Idempotent Gaussians have been defined in \cite[1.11.10]{puhalskii2001largedeviations} using the same formula \eqref{eq:gauss_cgf}.

\section{Related and Future Work}\label{sec:related}

We have described categories of bifunctions as a compositional setting for convex analysis which subsumes a variety of formalisms like linear functions and relations, as well as convex optimization problems, and has a rich duality theory and an elegant graphical language. We have then explored connections between convex analysis and probability theory, and showed that Gaussian probability can be equivalently described in a measure-theoretic and a convex-analytic language. The equivalence of these two perspectives is elegantly formalized as a structure-preserving functor between copy-delete categories. It will be interesting to see how this approach can be generalized to larger classes of distributions such as exponential families.

Concurrently to our work, the categorical structure of convex bifunctions has been exploited by \cite{hanks2023compositional} to compositionally build up objective functions for MPC in control theory. That work does not explore Legendre duality and the connections with categorical models of probability theory. The language of props has a history of applications in engineering \cite{baez:control,baez:props,bonchi:cat_signal_flow}, and our work was directly inspired by the semantics of probabilistic programming \cite{stein2021compositional,stein2021structural}.

A starting point for future work is to flesh out the outlook given in \Sec\ref{sec:pqf}, that is to define a hypergraph category of partial quadratic convex functions, which generalizes Gaussian and extended Gaussian probability. It is also interesting to give a presentation for this prop in the style of \cite{graphical_la}: We believe that this is achieved by the addition of a single generator $\nu : I \to \R$ to graphical affine algebra \cite{pbsz} which represents the quadratic function $f(x) = \frac 1 2 x^2$, and that its equational theory is essentially given by invariance under the orthogonal groups $O(n)$. A similar equational theory has been attempted in \cite{stein2021compositional} though no completeness has been proven. Diagrammatic presentations of concepts from geometry and optimization such as polyhedral algebra and Farkas lemma have been given in \cite{bonchi_et_al:polyhedral,bonchi_et_al:farkas}. 

We realize that the dependence on regularity assumptions (the caveat of \Sec\ref{sec:intro_cx}) makes general theorems about categories of bifunctions like Theorem~\ref{thm:adjoint_functoriality} somewhat awkward to state. We still believe that using a general categorical language is a useful way of structuring the field and making connections, but see the following avenues of improving the technical situation
\begin{enumerate}
\item Identifying specific, well-behaved subcategories of bifunctions (such as convex relations, (partial) linear and (partial) quadratic functions) on which everything behaves as desired. This was pursued in \Sec\ref{sec:examples} and \Sec\ref{sec:gaussians}.
\item The Legendre-Fenchel transform has been phrased in terms of enriched adjunctions in \cite{willerton}. It stands to hope that developing this enriched-categorical approach may take care of some regularity conditions in a systematic way.
\end{enumerate}

\subsubsection*{Acknowledgements}
We thank the anonymous reviewers for their careful reviews and suggestions for this work. 

\bibliographystyle{splncs04}
\bibliography{references}

\section*{Appendix}

\subsection{Functoriality for $\gauss$}\label{app:gauss}

We elaborate the proof of Theorem~\ref{thm:gauss_functor}:

\begin{proposition}
The assignment $\cgf : \gauss \to \cxbifn$ is functorial.
\end{proposition}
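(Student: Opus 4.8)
The plan is to verify directly that $\cgf$ preserves identities, composition, and tensor, checking that the resulting bifunctions are genuinely convex and closed along the way. Recall that for $f = (A,\mu,\Sigma) \in \gauss(\R^m,\R^n)$ we have $\cgf_f(x,y) = \tfrac12\langle y,\Sigma y\rangle + \langle\mu,y\rangle + \iv{x = A^Ty}$, which is the convex conjugate (up to the sign convention of the adjoint) of the affine-constrained quadratic $\logpdf_f$; convexity and closedness are then automatic since conjugates are always closed and convex, or one can observe directly that $\tfrac12\langle y,\Sigma y\rangle$ is convex (as $\Sigma \succeq 0$) and the constraint $x = A^Ty$ cuts out a linear subspace of $\R^m\times\R^n$. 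For the identity $\id_n = (\mathrm{id},0,0)$, one computes $\cgf_{\id_n}(x,y) = \tfrac12\langle y,0\rangle + \langle 0,y\rangle + \iv{x=y} = \iv{x=y}$, which is exactly the identity bifunction in $(\cxbifn,\white)$... wait, in the additive/black structure; either way it matches the stated identity.

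Next I would check composition. Given $f = (A,\mu,\Sigma) : \R^m \to \R^n$ and $g = (B,\nu,T) : \R^n \to \R^k$, with $g \circ f = (BA,\, \nu + B\mu,\, T + B\Sigma B^T)$ in $\gauss$, I need
\[
(\cgf_g \cxcirc \cgf_f)(x,z) = \inf_y\{\cgf_f(x,y) + \cgf_g(y,z)\} \stackrel{?}{=} \cgf_{g\circ f}(x,z).
\]
Unfolding the right side, $\cgf_f(x,y) = \tfrac12\langle y,\Sigma y\rangle + \langle\mu,y\rangle + \iv{x = A^Ty}$ and $\cgf_g(y,z) = \tfrac12\langle z,Tz\rangle + \langle\nu,z\rangle + \iv{y = B^Tz}$. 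The indicator in $\cgf_g$ forces $y = B^Tz$, collapsing the infimum to a single point; substituting gives $\iv{x = A^TB^Tz} = \iv{x = (BA)^Tz}$ for the constraint, $\tfrac12\langle B^Tz,\Sigma B^Tz\rangle + \tfrac12\langle z,Tz\rangle = \tfrac12\langle z,(T + B\Sigma B^T)z\rangle$ for the quadratic part, and $\langle\mu,B^Tz\rangle + \langle\nu,z\rangle = \langle B\mu + \nu, z\rangle$ for the linear part — which is exactly $\cgf_{g\circ f}(x,z)$. So in this case the infimum is attained trivially and the computation is a routine matching of coefficients.

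For the tensor, with $f = (A,\mu,\Sigma)$, $g = (B,\nu,T)$ one checks $\cgf_{f\otimes g}((x_1,x_2),(y_1,y_2)) = \cgf_f(x_1,y_1) + \cgf_g(x_2,y_2)$, which is immediate from block-diagonality of $A\oplus B$ and $\Sigma\oplus T$; and $\cgf$ sends the copy-delete structure $\Delta, !$ to the black Frobenius generators, which follows from the linear-algebra embedding $F$ of Theorem on $\catname{Vect}$ (a Gaussian morphism with $\Sigma = 0$, $\mu = 0$ is just a linear map and $\cgf$ restricts to $F_A$ on such). The main obstacle I anticipate is a bookkeeping one rather than a conceptual one: in the composition step the middle infimum is resolved by the hard linear constraint $y = B^Tz$ coming from $\cgf_g$, so there is \emph{no} genuine optimization to perform and hence no regularity condition to worry about here — the only real care needed is to track the generalized-inverse and image-subspace data consistently if one instead works through the $\logpdf$ side and invokes adjoint functoriality (Theorem~\ref{thm:adjoint_functoriality}), where the $\inf$/$\sup$ over the middle variable is nontrivial and one must confirm the relevant domains intersect in their relative interiors. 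I would present the direct $\cgf$ computation as the main line of argument precisely because it sidesteps that issue.
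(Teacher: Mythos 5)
Your proposal is correct and follows essentially the same route as the paper's own proof: a direct computation in which the hard linear constraint $\iv{y=B^Tz}$ collapses the middle infimum to a single point, after which the quadratic, linear, and indicator parts match the composite $(BA,\,\nu+B\mu,\,T+B\Sigma B^T)$ coefficient by coefficient. Your additional checks of identities, tensors, and convexity/closedness are fine (the identity of the category is simply $\iv{x=y}$, independent of which Frobenius structure one has in mind), and your observation that no regularity condition is needed because no genuine optimization occurs is exactly the point the paper's calculation implicitly relies on.
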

\begin{proof}
Let $f=(A,a,\Sigma) : \R^n \to \R^k$ and $g=(B,b,\Xi) : \R^m \to \R^n$ be two morphisms in $\gauss$, and
\begin{align*}
\cgf_f(y,z) &= \frac 1 2 \langle z,\Sigma z \rangle + \langle a, z \rangle + \iv{y=A^Tz} \\
\cgf_g(x,y) &= \frac 1 2 \langle y,\Xi y \rangle + \langle b, y \rangle + \iv{x=B^Ty} 
\end{align*}
their denotations, then it is straightforward to check that
\begin{align*}
&(\cgf_f \circ \cgf_g)(x,z) \\
=& \inff y {\frac 1 2 \langle z,\Sigma z \rangle + \langle a, z \rangle + \iv{y=A^Tz} + \frac 1 2 \langle y,\Xi y \rangle + \langle b, y \rangle + \iv{x=B^Ty} } \\
=& \frac 1 2 \langle z,\Sigma z \rangle + \langle a, z \rangle + \frac 1 2 \langle A^Tz,\Xi A^Tz \rangle + \langle b, A^Tz \rangle + \iv{x=B^TA^Tz} \\
=& \frac 1 2 \langle z,\Sigma z \rangle + \langle a, z \rangle + \frac 1 2 \langle z,A\Xi A^Tz \rangle + \langle Ab, z \rangle + \iv{x=(AB)^Tz} \\
=& \frac 1 2 \langle z,(\Sigma + A\Xi A^T) z \rangle + \langle a+Ab, z \rangle + \iv{x=(AB)^Tz} \\
=& \cgf_{f\circ g}(x,z)
\end{align*}
\end{proof}

\subsection{Duality Theory of Quadratic Functions}\label{appendix:partialquadratic}
We spell out some calculations involving the adjoints of quadratic forms (as elaborated in \cite[p.~107]{rockafellar}).

\begin{proposition}
Let $A \in \R^{m \times n}$ be any symmetric matrix and $S=\im(A)$. Then the restricted linear map $S \to S, x \mapsto Ax$ is invertible, meaning $\forall y \in S \exists! x \in S, Ax=y$.
\end{proposition}
\begin{proposition}
Easy consequence of $\ker(A) = \im(A^\bot)^\bot = \im(A)^\bot$.
\end{proposition}

Let $A \in \R^{m \times n}$ be any matrix. A \emph{generalized inverse} of $A$ is a matrix $A^\g \in \R^{n \times m}$ such that $AA^\g A=A$. This means that for any $y \in \im(A)$, the vector $x=A^\g y$ must satisfy $Ax=y$. Generalized inverses exist for every matrix, but they are generally not unique unless $A$ is invertible, in which case $A^\g=A^{-1}$. A particularly canonical generalized inverse is the Moore-Penrose pseudoinverse $A^+$, but our development does not rely on this particular choice. 

\begin{proposition}\label{prop:gen_inverse}
Let $A \in \R^{n \times n}$ be a symmetric matrix, then the value $\langle y,A^\g y\rangle$ does not depend on the choice of generalized inverse $A^\g$ for all $y\in \im(S)$.
\end{proposition}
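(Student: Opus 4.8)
The plan is to reduce the quantity $\langle y, A^\g y\rangle$ to an expression that does not mention $A^\g$ at all, exploiting the two hypotheses: that $A$ is symmetric, and that $y$ lies in $\im(A)$. First I would use $y \in S = \im(A)$ to pick a preimage, i.e.\ some $x \in \R^n$ with $Ax = y$. The point of this is that the defining property $A A^\g A = A$ of a generalized inverse only ``bites'' when it is sandwiched between copies of $A$, so we want to manufacture such a sandwich.

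Concretely, for an arbitrary generalized inverse $A^\g$ of $A$, I would compute
\[ \langle y, A^\g y \rangle = \langle Ax, A^\g A x \rangle = \langle x, A^T A^\g A x \rangle = \langle x, A A^\g A x \rangle = \langle x, A x \rangle, \]
where the third equality uses symmetry $A^T = A$ and the last uses $A A^\g A = A$. The resulting value $\langle x, Ax\rangle$ is manifestly independent of the choice of $A^\g$, which is exactly the claim. As a (not strictly needed) sanity check one can also see the value is independent of the chosen preimage $x$: if $Ax = Ax' = y$ then $x - x' \in \ker(A)$, and since $A$ is symmetric the preceding proposition gives $\ker(A) = \im(A)^\bot$, so $\langle x - x', y\rangle = 0$, whence $\langle x, Ax\rangle = \langle x, y\rangle = \langle x', y\rangle = \langle x', Ax'\rangle$.

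There is essentially no hard step here; the only thing to be careful about is that the argument genuinely requires both hypotheses. Symmetry is what lets the ``sandwich'' $A^T A^\g A$ collapse to $A$ (without it one would be stuck with $A^T A^\g A \neq A$ in general), and $y \in \im(A)$ is what licenses the substitution $y = Ax$ in the first place — for $y \notin \im(A)$ the value $\langle y, A^\g y\rangle$ really can depend on $A^\g$. So the ``main obstacle'', such as it is, is simply recognizing that these two facts together force the collapse; after that it is a one-line computation.
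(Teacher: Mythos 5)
Your proof is correct and follows essentially the same route as the paper: pick a preimage $x$ with $Ax=y$, use symmetry to collapse $\langle Ax, A^{\g}Ax\rangle$ to $\langle x, AA^{\g}Ax\rangle = \langle x, Ax\rangle$, and observe the result no longer mentions $A^{\g}$. Your remark that independence of the chosen preimage is only a sanity check (since every preimage yields the same chain of equalities) is a correct, slightly cleaner observation than the paper's phrasing, which treats that independence as a step to be verified.
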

\begin{proof}
Let $y=Ax$, then $\langle Ax,A^\g Ax \rangle = \langle x,AA^\g A x \rangle = \langle x,A x \rangle$, so we need to show that that value does not depend on the choice of $x$. Let $x'$ be another solution, then $A(x-x')=0$, and we can derive $\langle x,Ax \rangle - \langle x',Ax' \rangle = 0$.
\end{proof}

\begin{proposition}
Let $\Sigma \in \R^{n \times n}$ be a positive definite matrix, and consider the convex quadratic function $f(x) = \frac 1 2\langle x,\Sigma x\rangle$, then $f^*(x^*) = \frac 1 2 \langle x^*, \Sigma^{-1} x^* \rangle$.
\end{proposition}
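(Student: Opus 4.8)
The plan is to compute the supremum in the definition $f^*(x^*) = \supp{x}{\langle x^*,x\rangle - \tfrac12\langle x,\Sigma x\rangle}$ directly by first-order optimality. Write $g(x) = \langle x^*,x\rangle - \tfrac12\langle x,\Sigma x\rangle$. Since $\Sigma$ is positive definite, $g$ is a strictly concave, everywhere-differentiable function whose quadratic part dominates, so $g(x) \to -\infty$ as $\|x\| \to \infty$; hence the supremum is attained at the unique point where $\nabla g$ vanishes. Computing $\nabla g(x) = x^* - \Sigma x$ (using symmetry of $\Sigma$), the maximizer is $x = \Sigma^{-1}x^*$, which is well-defined precisely because $\Sigma$ is invertible.

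Next I would substitute this maximizer back in. Using symmetry of $\Sigma^{-1}$ and the identity $\Sigma\Sigma^{-1} = \id$, we get
\[
g(\Sigma^{-1}x^*) = \langle x^*, \Sigma^{-1}x^*\rangle - \tfrac12\langle \Sigma^{-1}x^*, x^*\rangle = \tfrac12\langle x^*,\Sigma^{-1}x^*\rangle,
\]
which is the claimed value. An alternative route, which I would mention as a sanity check, is to diagonalize $\Sigma = Q D Q^T$ by the spectral theorem with $D = \operatorname{diag}(\lambda_1,\dots,\lambda_n)$, $\lambda_i > 0$, reduce to the separable case by the orthogonal change of variables $u = Q^T x$, $u^* = Q^T x^*$, and apply the one-dimensional computation $f^*(c) = \tfrac1{4a}c^2$ for $f(u) = a u^2$ (the earlier example with $a = \lambda_i/2$) coordinatewise; reassembling gives $\tfrac12\langle x^*, QD^{-1}Q^T x^*\rangle = \tfrac12\langle x^*,\Sigma^{-1}x^*\rangle$.

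There is essentially no serious obstacle here: the only point requiring a word of care is the justification that the supremum is actually a maximum (so that the stationary point genuinely computes $f^*$), which follows from strict concavity plus coercivity of $-g$ under positive-definiteness of $\Sigma$; degenerate $\Sigma$ is deliberately excluded and is treated separately via generalized inverses in the surrounding development. The decoration $x^*$ plays no role beyond notation, consistent with the identification $(\R^n)^* \cong \R^n$ fixed earlier.
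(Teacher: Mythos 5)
Your proof is correct and follows essentially the same route as the paper's: the paper likewise identifies the stationary point $x=\Sigma^{-1}x^*$ from the gradient $x^*-\Sigma x$ and concludes (though it omits the back-substitution you carry out, and your remarks on coercivity and attainment make the argument more complete). The spectral-theorem sanity check is a nice addition but not needed.
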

\begin{proof}
The function $x \mapsto \langle x^*,x \rangle - f(x)$ is differentiable with gradient $x^*-\Sigma x$, hence its minimum is attained at $x=\Sigma^{-1}x^*$. 
\end{proof}

\begin{proposition}\label{ref:conj_semidef}
Let $\Sigma \in \R^{n \times n}$ be a positive \emph{semidefinite} matrix and let $f(x) = \frac 1 2\langle x,\Sigma x\rangle$. Then
\[ f^*(x^*) = \frac 1 2 \langle x^*, \Sigma^\g x^* \rangle + \iv{x^* \in S} \]
where $S=\im(\Sigma)$ and $\Sigma^\g$ is any generalized inverse of $\Sigma$.
\end{proposition}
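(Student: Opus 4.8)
The plan is to compute the conjugate $f^*(x^*) = \supp{x}{\langle x^*,x\rangle - \frac12\langle x,\Sigma x\rangle}$ directly, splitting into the two cases $x^* \notin S$ and $x^* \in S$ and exploiting the kernel–image decomposition of $\R^n$ induced by the symmetric matrix $\Sigma$ (so that $S^\bot = \ker(\Sigma)$, as noted just above).

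First I would dispose of the case $x^* \notin S = \im(\Sigma)$. Since $S^\bot = \ker(\Sigma)$, there is a vector $d \in \ker(\Sigma)$ with $\langle x^*,d\rangle \neq 0$. Evaluating the objective along the ray $x = td$ gives $t\langle x^*,d\rangle - \frac12\langle td,\Sigma td\rangle = t\langle x^*,d\rangle$, which tends to $+\infty$ for a suitable sign of $t$. Hence $f^*(x^*) = +\infty$, matching the claimed formula, since there $\iv{x^* \in S} = +\infty$.

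Next, for $x^* \in S$, decompose an arbitrary $x = x_S + x_0$ with $x_S \in S$ and $x_0 \in \ker(\Sigma)$. Because $x^* \in S = \ker(\Sigma)^\bot$ we have $\langle x^*,x\rangle = \langle x^*,x_S\rangle$, and because $\Sigma x_0 = 0$ we have $\langle x,\Sigma x\rangle = \langle x_S,\Sigma x_S\rangle$; so the supremum is attained over $x_S \in S$ alone, of the quadratic $\phi(x_S) = \langle x^*,x_S\rangle - \frac12\langle x_S,\Sigma x_S\rangle$, which is strictly concave on $S$ since $\Sigma$ restricted to $S$ is positive definite. By the earlier proposition the restricted map $\Sigma\colon S \to S$ is invertible, so $\phi$ has a unique critical point $\bar x_S \in S$ determined by $\Sigma \bar x_S = x^*$, and by strict concavity this is the global maximum on $S$.

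It remains to evaluate at $\bar x_S$ and identify the result with $\frac12\langle x^*,\Sigma^\g x^*\rangle$. Since $x^* \in \im(\Sigma)$, any generalized inverse satisfies $\Sigma\Sigma^\g x^* = x^*$, so $\Sigma^\g x^*$ is one solution of $\Sigma x = x^*$ and differs from $\bar x_S$ by an element of $\ker(\Sigma)$; as $x^*$ annihilates $\ker(\Sigma)$ this gives $\langle x^*,\Sigma^\g x^*\rangle = \langle x^*,\bar x_S\rangle$, a value independent of the choice of $\Sigma^\g$ by Proposition~\ref{prop:gen_inverse}. Substituting $\Sigma \bar x_S = x^*$,
\[ f^*(x^*) = \langle x^*,\bar x_S\rangle - \frac12\langle \bar x_S,\Sigma\bar x_S\rangle = \langle x^*,\bar x_S\rangle - \frac12\langle \bar x_S,x^*\rangle = \frac12\langle x^*,\Sigma^\g x^*\rangle, \]
as claimed. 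I expect the only genuinely delicate point to be precisely the last observation — that $\Sigma^\g x^*$ need not lie in $S$, so one must argue the value $\langle x^*,\Sigma^\g x^*\rangle$ is both independent of the choice of generalized inverse and equal to the pairing with the canonical solution $\bar x_S$ — but this is exactly the content of Proposition~\ref{prop:gen_inverse}, so no real obstacle remains; everything else is the routine kernel–image bookkeeping above.
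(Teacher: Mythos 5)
Your proof is correct and follows essentially the same route as the paper's: reduce the supremum to the subspace $S=\im(\Sigma)$ where $\Sigma$ is invertible, solve the critical-point equation $\Sigma \bar x_S = x^*$, and identify the optimal value with $\frac 1 2\langle x^*,\Sigma^\g x^*\rangle$ via the independence of that pairing from the choice of generalized inverse. You are somewhat more explicit than the paper in two places it merely asserts — the ray argument showing $f^*(x^*)=+\infty$ for $x^*\notin S$, and the kernel--image bookkeeping showing orthogonal components do not affect the supremand — which is a welcome addition but not a different method.
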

\begin{proof}
We consider the supremum
\[ f^*(x^*) = \supp x {\langle x^*,x \rangle - \frac 1 2 \langle x,\Sigma x \rangle}\]
If $x^* \notin S$, the supremum is $+\infty$; we proceed with the remaining case $x^* \in S$. Then in fact, we can restrict the computation to
\begin{equation} \supp {x \in S} {\langle x^*,x \rangle - \frac 1 2 \langle x,\Sigma x \rangle} \label{eq:restricted_sup} \end{equation}
because components orthogonal to $S$ don't change the value of the supremand. The restricted map $S \to S, x \mapsto \Sigma x$ is invertible, so we can find a matrix $T$ such that $\forall y \in S, Ty \in S$ and $\Sigma T y = T\Sigma y = y$. By the previous proposition, the desired convex conjugate becomes
\[ f^*(x^*) = \frac 1 2 \langle x^*,Tx^* \rangle + \iv{x^* \in S} \]
We note that $T$ is a generalized inverse of $\Sigma$, and by Proposition~\ref{prop:gen_inverse}, any generalized inverse will work in place of $T$.
\end{proof}

\begin{proposition}
$\cgf$ and $\logpdf$ are adjoints for Gaussians.
\end{proposition}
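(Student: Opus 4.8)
The plan is to verify that for every Gaussian morphism $f=(A,\mu,\Sigma):\R^m\to\R^n$, the convex bifunction $\cgf_f:\R^m\cxto\R^n$ and the concave bifunction $\logpdf_f:\R^n\cvto\R^m$ are mutually adjoint, i.e.\ $\cgf_f^{*}=\logpdf_f$ and $\logpdf_f^{*}=\cgf_f$. I would start by computing the adjoint of $\cgf_f$ straight from the definition,
\[ \cgf_f^{*}(y^{*},x^{*}) \;=\; \inff{x,y}{\cgf_f(x,y)+\langle x^{*},x\rangle-\langle y^{*},y\rangle}. \]
The first move is to eliminate the variable $x\in\R^m$: the summand $\iv{x=A^{T}y}$ appearing in $\cgf_f(x,y)$ forces $x=A^{T}y$, so the double infimum collapses to an unconstrained infimum over $y\in\R^n$ of
\[ \tfrac12\langle y,\Sigma y\rangle+\langle\mu,y\rangle+\langle Ax^{*},y\rangle-\langle y^{*},y\rangle \;=\; \tfrac12\langle y,\Sigma y\rangle+\langle b,y\rangle,\qquad b:=\mu+Ax^{*}-y^{*}. \]

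Next I would recognise this quantity as $-\bigl(\tfrac12\langle\,\cdot\,,\Sigma\,\cdot\,\rangle\bigr)^{*}(-b)$, that is, minus the convex conjugate of the positive semidefinite quadratic form evaluated at $-b$, and invoke Proposition~\ref{ref:conj_semidef}. This gives
\[ \inff{y}{\tfrac12\langle y,\Sigma y\rangle+\langle b,y\rangle} \;=\; -\tfrac12\langle b,\Sigma^{\g}b\rangle-\iv{b\in S},\qquad S:=\im(\Sigma), \]
the right-hand side being independent of the chosen generalized inverse by Proposition~\ref{prop:gen_inverse}, and using that $S$ is a linear subspace so $-b\in S\iff b\in S$. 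Writing $z:=y^{*}-(Ax^{*}+\mu)=-b$ and noting $\langle b,\Sigma^{\g}b\rangle=\langle z,\Sigma^{\g}z\rangle$, the expression is exactly $\logpdf_f(y^{*},x^{*})$; hence $\cgf_f^{*}=\logpdf_f$. (Equivalently one could route this through the identity $\cgf_f^{*}(y^{*},x^{*})=-\graph{\cgf_f}^{*}(-x^{*},y^{*})$ relating the adjoint to the conjugate of the graph function, reducing everything to Proposition~\ref{ref:conj_semidef} in one step.)

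For the reverse identity I would not repeat the symmetric computation, but instead observe that $\cgf_f$ is a partial convex quadratic function, hence a \emph{closed} convex bifunction: its graph function is finite and continuous on the closed affine domain cut out by $\iv{x=A^{T}y}$. The bifunction version of Proposition~\ref{prop:double_conjugate} (\cite[Theorem~30.1]{rockafellar}) then gives $\logpdf_f^{*}=\cgf_f^{**}=\cl(\cgf_f)=\cgf_f$, closing the argument. The step I expect to be the main obstacle is the bookkeeping in the singular-$\Sigma$ case: one must check that the ``$+\infty$'' branch of $\cgf_f$ matches the ``$-\infty$'' branch of $\logpdf_f$ — concretely, that $\tfrac12\langle y,\Sigma y\rangle+\langle b,y\rangle$ is unbounded below in $y$ precisely when $b\notin S$, mirroring the $-\iv{z\in S}$ term — and that every occurrence of $\Sigma^{\g}$ acts only on vectors of $S$, where it behaves like a genuine inverse; these are exactly the facts assembled in Appendix~\ref{appendix:partialquadratic}. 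The affine shift $\mu$ and the linear factor $A$ cause no genuine trouble, entering only through the linear coefficient $b$ and the linear constraint $x=A^{T}y$.
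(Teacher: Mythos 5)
Your proof is correct and follows essentially the same route as the paper's: eliminate $x$ via the constraint $\iv{x=A^Ty}$, reduce the remaining infimum over $y$ to the convex conjugate of the semidefinite quadratic form (Proposition~\ref{ref:conj_semidef}), and obtain the converse from closedness of partial quadratic functions together with the double-conjugation theorem. The only (welcome) difference is that you handle the invertible and singular cases of $\Sigma$ uniformly by invoking Proposition~\ref{ref:conj_semidef} directly, where the paper first treats the invertible case by a gradient computation and then appeals to the generalized inverse separately.
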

\begin{proof}
Let $f = (A,\mu,\Sigma) \in \gauss(\R^m,\R^n)$. We show that $\cgf_f^* = \logpdf$. The converse follows because partial quadratic functions are closed and satisfy the hypothesis of Prop.~\ref{prop:double_conjugate}. We have
\begin{align*}
\cgf_f^*(y^*,x^*) &= \inff {x,y} {\frac 1 2 \langle y,\Sigma y \rangle + \langle \mu, y \rangle + \iv{x=A^Ty} + \langle x^*,x\rangle - \langle y^*,y \rangle} \\
&= \inff y {\frac 1 2 \langle y,\Sigma y \rangle + \langle \mu, y \rangle + \langle x^*,A^Ty\rangle - \langle y^*,y \rangle} \\
&= \inff y {\frac 1 2 \langle y,\Sigma y \rangle - \langle y,z \rangle}
\end{align*}
where $z=y^*-(Ax^*+\mu)$. If $\Sigma$ is invertible, the gradient with respect to $y$ of the infimand is $\Sigma y - z$ and the minimum is attained for $y = \Sigma^{-1}z$. The optimal value is
\begin{align*}
\cgf_f^*(y^*,x^*) &= \frac 1 2 \langle \Sigma^{-1}z,\Sigma\Sigma^{-1}z \rangle - \langle y, \Sigma^{-1}z \rangle = -\frac 1 2 \langle z,\Sigma^{-1}z \rangle
\end{align*}
If $\Sigma$ is singular, we use the generalized inverse $\Sigma^-$ of Prop.~\ref{ref:conj_semidef} together with the condition $\iv{y^* \in \im(\Sigma)}$, giving the formula $\logpdf_f$ of Theorem~\ref{thm:linrel_functor} as desired.
\end{proof}

\subsection{Partial Quadratic Functions}\label{app:pcqf}
The theory of (partial) quadratic functions is spelled out in \cite[p.~109]{rockafellar}, which we summarize here: 
A quadratic function $q : \R^n \to \R$ is convex if and only if it is of the form
\[ q(x) = \langle x, A x \rangle + \langle \mu,x \rangle + c \]
with $A$ positive semidefinite. A \emph{partial convex quadratic function} (pcqf) is function of the form
\[ f(x) = q(x) + \iv{x \in M} \]
where $q$ is a convex quadratic function and $M \subseteq \R^n$ is an affine subspace. One can show that every pcqf arises via suitable linear transformations from an \emph{elementary pcqf} given by the diagonal form
\[  h(x) = \frac 1 2 (\lambda_1x_1^2 + \ldots + \lambda_nx_n^2) \text{ with } \lambda_i \in [0,+\infty] \]
Its domain is the subspace $\{ x : \forall i, \lambda_i = +\infty \Rightarrow x_i = 0 \}$.
Its conjugate is of the same form
\[ h^*(x) = \frac 1 2 (\lambda_1^*x_1^2 + \ldots + \lambda_n^*x_n^2) \text{ with } \lambda_i \in [0,+\infty] \]
where 
\[ \lambda^* = \begin{cases}
    +\infty, &\lambda = 0 \\
    0, &\lambda = +\infty \\
    \lambda^{-1}, &\text{otherwise}
\end{cases}\]
From this formula, we can derive that the class of pcqf is this closed under convex conjugation.

\end{document}